\DeclareMathOperator*{\forkindep}{\raise0.2ex\hbox{\ooalign{\hidewidth$\vert$\hidewidth\cr\raise-0.9ex\hbox{$\smile$}}}}
\let\@wraptoccontribs\wraptoccontribs
\DeclareMathOperator{\ord}{ord}
\begin{document}

\newtheorem {thm}{Theorem}[section]
\newtheorem{theorem}{Theorem}
\newtheorem{lemma}[theorem]{Lemma}
\newtheorem{claim}[theorem]{Claim}
\newtheorem{definition}[theorem]{Definition}
\newtheorem{corollary}[theorem]{Corollary}
\newtheorem{aside}[theorem]{Aside}
\newtheorem{corr}[thm]{Corollary}
\newtheorem{fact}[thm]{Fact}
\newtheorem {cl}[thm]{Claim}
\newtheorem*{thmstar}{Theorem}
\newtheorem{prop}[thm]{Proposition}
\newtheorem{proposition}[thm]{Proposition}
\newtheorem*{propstar}{Proposition}
\newtheorem {lem}[thm]{Lemma}
\newtheorem*{lemstar}{Lemma}
\newtheorem{conj}[thm]{Conjecture}
\newtheorem{question}[thm]{Question}
\newtheorem*{questar}{Question}
\newtheorem{ques}[thm]{Question}
\newtheorem*{conjstar}{Conjecture}
\theoremstyle{remark}
\newtheorem{remark}[thm]{Remark}
\newtheorem{rem}[thm]{Remark}
\newtheorem{np*}{Non-Proof}
\newtheorem*{remstar}{Remark}
\theoremstyle{definition}
\newtheorem{defn}[thm]{Definition}
\newtheorem*{defnstar}{Definition}
\newtheorem{exam}[thm]{Example}
\newtheorem{example}[theorem]{Example}
\newtheorem*{examstar}{Example}
\newcommand{\pd}[2]{\frac{\partial #1}{\partial #2}}
\newcommand{\pp}{\partial }
\newcommand{\pdtwo}[2]{\frac{\partial^2 #1}{\partial #2^2}}
\def\Ind{\setbox0=\hbox{$x$}\kern\wd0\hbox to 0pt{\hss$\mid$\hss} \lower.9\ht0\hbox to 0pt{\hss$\smile$\hss}\kern\wd0}
\def\Notind{\setbox0=\hbox{$x$}\kern\wd0\hbox to 0pt{\mathchardef \nn=12854\hss$\nn$\kern1.4\wd0\hss}\hbox to 0pt{\hss$\mid$\hss}\lower.9\ht0 \hbox to 0pt{\hss$\smile$\hss}\kern\wd0}
\def\ind{\mathop{\mathpalette\Ind{}}}
\def\nind{\mathop{\mathpalette\Notind{}}} 
\newcommand{\m}{\mathbb }
\newcommand{\mc}{\mathcal }
\newcommand{\mf}{\mathfrak }
\newcommand{\is}{^{p^ {-\infty}}}
\newcommand{\codim}{\operatorname{codim}}
\newcommand{\Gal}{\operatorname{Gal}}
\newcommand{\Num}{\operatorname{Num}}
\newcommand{\Cl}{\operatorname{Cl}}
\newcommand{\Div}{\operatorname{Div}}
\newcommand{\Ann}{\operatorname{Ann}}
\newcommand{\Frac}{\operatorname{Frac}}
\newcommand{\lcm}{\operatorname{lcm}}
\newcommand{\height}{\operatorname{ht}}
\newcommand{\Der}{\operatorname{Der}}
\newcommand{\Pic}{\operatorname{Pic}}
\newcommand{\Sym}{\operatorname{Sym}}
\newcommand{\Proj}{\operatorname{Proj}}
\newcommand{\characteristic}{\operatorname{char}}
\newcommand{\Spec}{\operatorname{Spec}}
\newcommand{\Hom}{\operatorname{Hom}}
\newcommand{\res}{\operatorname{res}}
\newcommand{\Aut}{\operatorname{Aut}}
\newcommand{\length}{\operatorname{length}}
\newcommand{\Log}{\operatorname{Log}}
\newcommand{\Set}{\operatorname{Set}}
\newcommand{\Fun}{\operatorname{Fun}}
\newcommand{\id}{\operatorname{id}}
\newcommand{\Gp}{\operatorname{Gp}}
\newcommand{\Ring}{\operatorname{Ring}}
\newcommand{\Mod}{\operatorname{Mod}}
\newcommand{\Mor}{\operatorname{Mor}}
\newcommand{\SheafHom}{\mathcal{H}om}
\newcommand{\pre}{\operatorname{pre}}
\newcommand{\coker}{\operatorname{coker}}
\newcommand{\acl}{\operatorname{acl}}
\newcommand{\dcl}{\operatorname{dcl}}
\newcommand{\tp}{\operatorname{tp}}
\newcommand{\dom}{\operatorname{dom}}
\newcommand{\val}{\operatorname{val}}
\newcommand{\Aa}{\mathbb{A}}
\newcommand{\Qq}{\mathbb{Q}}
\newcommand{\Rr}{\mathbb{R}}
\newcommand{\Zz}{\mathbb{Z}}
\newcommand{\Nn}{\mathbb{N}}
\newcommand{\Cc}{\mathbb{C}}
\newcommand{\Ii}{\mathbb{I}}
\newcommand{\Gg}{\mathbb{G}}
\newcommand{\Uu}{\mathbb{U}}
\newcommand{\Mm}{\mathbb{M}}
\newcommand{\Ff}{\mathbb{F}}
\newcommand{\Pp}{\mathbb{P}}

\def\bu{{\rm{u}}}
\def\bv{{\rm{v}}}
\def\V{\mathbb{V}}
\def\P{\mathbb{P}}
\def\L{\mathbb{L}}
\def\Y{\mathbb{Y}}
\def\I{\mathbb{I}}
\def\den{{\rm{den}}}
\def\num{{\rm{num}}}
\def\card{{\rm{card}}}
\def\CI{{\mathcal{I}}}
\def\ld{{\rm{ld}}}
\def\init{{\rm{init}}}
\def\Sep{\rm {S}}
\def\chow{{\rm{Chow}}}
\def\H{{\rm{H}}}
\def\denom{{\rm{denom}}}
\def\det{{\rm{det}}}
\def\sat{{\rm{sat}}}
\def\asat{{\rm{asat}}}
\def\ff{\mathcal{F}}
\def\VB{{\mathbf{V}}}
\def\CV{{\mathbb{CV}}}
\def\pr{{\rm{pr}}}
\def\id{{\rm{id}}}
\def\lv{{\rm{lv}}}
\def\dim{{\rm{dim}}}
\def\trdeg{\hbox{\rm{tr.deg}}}
\def\dtrdeg{\hbox{\rm{d.tr.deg}}}
\def\ZZ{\mathbb Z}
\def\NN{\mathbb N}
\def\d{\delta}
\def\D{\Delta}
\def\ord{\operatorname{ord}}

\title[Definability of Kolchin polynomials]{Effective definability of Kolchin polynomials} 

\author{James Freitag}
\address{James Freitag\\
University of Illinois Chicago\\
Department of Mathematics, Statistics, and Computer Science\\
851 S. Morgan Street\\
Chicago, IL, USA, 60607-7045.}
\email{freitag@math.uic.edu}

\author{Omar Le\'on S\'anchez}
\address{Omar Le\'on S\'anchez\\
University of Manchester\\
School of Mathematics\\
Oxford Road \\
Manchester, UK, M13 9PL.}
\email{omar.sanchez@manchester.ac.uk}

\author{Wei Li}
\address{Wei Li\\
Academy of Mathematics and Systems Science\\
Chinese Academy of Sciences\\
No.55 Zhongguancun East Road\\
Beijing, China, 100190.}
\email{liwei@mmrc.iss.ac.cn}

\date{\today}
\subjclass[2010]{12H05, 14Q20}
\keywords{differential fields, Kolchin polynomial, effective definability in families}
\thanks{JF is partially supported by  NSF Grant 1700095. WL is partially supported by  NSFC Grants (11688101, 11301519, 11671014)}

\begin{abstract}
While the natural model-theoretic ranks available in differentially closed fields (of characteristic zero), namely Lascar and Morley rank, are known \emph{not} to be definable in families of differential varieties; in this note we show that the differential-algebraic rank given by the Kolchin polynomial is in fact definable. As a byproduct, we are able to prove that the property of being \emph{weakly irreducible} for a differential variety is also definable in families. The question of full irreducibility remains open, it is known to be equivalent to the generalized Ritt problem.
\end{abstract}

\maketitle

\tableofcontents

\section{Introduction and some preliminaries} \label{intro}

Fix a differentially closed field of characteristic zero $(K, \Delta)$ with $\Delta =\{\delta_1,\dots,\delta_m\}$ a set of distinguished commuting derivations. We let $k$ be a differential subfield of $K$. Furthermore, we assume that $K$ is ``big''; in the sense that it is a universal model for differential-algebraic geometry. In particular, $K$ is universal over $k$ (or over any ``small" differential subfield for that matter). In model-theoretic parlance, $K$ is a saturated model of the theory $\operatorname{DCF}_{0,m}$. 



Recall that a numerical polynomial (in one variable) is a polynomial $p\in \mathbb{Q}[t]$ such that $p(s)\in \ZZ$ for all integers $s$. Numerical polynomials always have the form
$$p(t)=\sum_{i=0}^d a_i\binom{t+i}{i}$$
for some integers $a_i$'s. The tuple $(a_d,\dots,a_0)$ is usually called the standard coefficients of $p$.

For an $m$-tuple $\xi=(u_1,\dots,u_m)\in \NN^m$, we let $\ord(\xi)=u_1+\cdots +u_m$, and we use multi-index notation to denote derivative operators; that is, $\d^\xi=\d_1^{u_1}\cdots\d_m^{u_m}$. Let us recall a classical result of Kolchin:

\begin{fact}[\cite{KolchinDAAG}, Chap.2, \S 12]\label{kolpol}
Let $a=(a_1,\dots,a_n)$ be a tuple from $K$. There exists a numerical polynomial $\omega_{a/k}$ with the following properties:
\begin{enumerate}
\item [(i)] For sufficiently large $s\in \NN$, $\omega_{a/k}(s)$ equals the transcendence degree of $k(\d^\xi a_i:\ord\xi\leq s, i=1,\dots,n)$ over $k$.
\item [(ii)]  $\deg \omega_{a/k}\leq m$.
\item [(iii)] If we write $\omega_{a/k}(t)=\sum_{i=0}^m a_i\binom{t+i}{i}$ where $a_i\in \ZZ$, then $a_m$ equals the differential-transcendence degree of the differential field $k\langle a\rangle$.
\end{enumerate}
\end{fact}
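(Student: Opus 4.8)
The plan is to reduce the statement to a purely combinatorial count of lattice points, following Kolchin's original strategy via characteristic sets. I would work in the differential polynomial ring $k\{y_1,\dots,y_n\}=k[\d^\xi y_i:\xi\in\NN^m,\ 1\le i\le n]$ and let $\mf p=I(a/k)$ be the prime differential ideal of all differential polynomials vanishing at $a$. Fix an \emph{orderly} ranking on the derivatives $\d^\xi y_i$ (one in which $\ord\xi<\ord\eta$ forces $\d^\xi y_i$ to rank below $\d^\eta y_j$) and choose a characteristic set $A=\{A_1,\dots,A_r\}$ of $\mf p$ with leaders $u_j=\d^{\xi_j}y_{i_j}$. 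Call a derivative $\d^\xi y_i$ \emph{parametric} if it is neither a proper nor an improper derivative of any leader, i.e.\ there is no $j$ with $i_j=i$ and $\xi\ge\xi_j$ coordinatewise.

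First I would show that, for each fixed $s$, the parametric derivatives $\d^\xi a_i$ with $\ord\xi\le s$ form a transcendence basis of $R_s:=k(\d^\xi a_i:\ord\xi\le s)$ over $k$. Algebraic independence uses the defining property of a characteristic set, namely that $\mf p$ contains no nonzero element reduced with respect to $A$; any algebraic relation among parametric derivatives would, after Ritt--Kolchin reduction, produce such an element. For the spanning direction, each derivative $\d^\xi y_i$ that \emph{is} a derivative of some leader $u_j$ is the leader of the prolongation $\d^{\xi-\xi_j}A_j\in\mf p$, and since the separant and initial of $A_j$ do not vanish at $a$, this yields an algebraic dependence of $\d^\xi a_i$ on strictly lower-ranked derivatives; an induction on the (orderly) ranking then shows every such derivative is algebraic over the parametric ones. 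Consequently $\omega_{a/k}(s)$ equals the number of parametric derivatives of order at most $s$.

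It remains to count these. For each $i$ let $C_i=\{\xi_j:i_j=i\}\subseteq\NN^m$ be the finite set of leader-exponents attached to $y_i$; then
$$\omega_{a/k}(s)=\sum_{i=1}^n \#\{\xi\in\NN^m:\ord\xi\le s,\ \xi\not\ge\eta\ \text{for all}\ \eta\in C_i\}.$$
The main technical point, and the step I expect to be the real obstacle, is Kolchin's combinatorial lemma: for any finite $C\subseteq\NN^m$ the function $s\mapsto\#\{\xi:\ord\xi\le s,\ \xi\not\ge\eta\ \forall\eta\in C\}$ agrees, for $s$ large, with a numerical polynomial of degree $\le m$. I would establish this by inclusion--exclusion together with the identity $\#\{\xi:\ord\xi\le s,\ \xi\ge\eta\}=\binom{s-\ord\eta+m}{m}$ for a single $\eta$, or equivalently by invoking that the complementary Hilbert function of the monomial ideal $(x^\eta:\eta\in C)\subseteq k[x_1,\dots,x_m]$ is eventually polynomial. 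Summing over $i$ yields part (i) and the bound (ii).

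Finally, for (iii), observe that when $C_i=\emptyset$ the $i$-th summand is the full count $\binom{s+m}{m}$, contributing binomial leading coefficient $1$, whereas when $C_i\ne\emptyset$ the leading terms cancel and the summand has degree $<m$ (already visible from $\binom{s+m}{m}-\binom{s-\ord\eta+m}{m}$ for a single nonzero $\eta$). Hence the top standard coefficient is $a_m=\#\{i:C_i=\emptyset\}$, the number of variables not occurring as a leader of $A$. I would close by recalling the standard fact that these are exactly a differential transcendence basis of $k\langle a\rangle$ over $k$: the $a_i$ with $C_i=\emptyset$ are differentially independent (no prolongation of a leader lies over them, so no nonzero differential polynomial in these variables alone can be reduced to a nonzero element of $\mf p$), while each remaining $a_i$ satisfies the differential-algebraic relation coming from the $A_j$ with leader in $y_i$, hence is differentially algebraic over the free variables. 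Therefore $a_m=\dtrdeg k\langle a\rangle/k$.
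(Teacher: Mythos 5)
The paper does not prove this statement---it is quoted as a Fact with a citation to Kolchin's book---but your reconstruction is the standard argument, and it is exactly the machinery the paper itself deploys in Section~2: the decomposition $\omega_P=\sum_i\omega_{E_i}$ over leader-exponent sets (Fact~\ref{use1}), the combinatorial lemma that $|V_E(s)|$ is eventually a numerical polynomial of degree $\leq m$ (the discussion around Proposition~\ref{agriculture}), and the observation that the parametric derivatives of order $\leq s$ form a transcendence basis because $\Lambda$ is a characteristic set (the proof of Theorem~\ref{sbound}). Your argument is correct and essentially identical to the cited proof; the only step stated loosely is the spanning half of (iii), where concluding that the $a_i$ with $C_i\neq\emptyset$ are differentially algebraic over $k\langle a_j:C_j=\emptyset\rangle$ needs either an induction over the variables or the leading-coefficient comparison you already have in hand, but this is the ``standard fact'' you explicitly invoke and not a gap in the architecture.
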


The polynomial $\omega_{a/k}$ is called the \emph{Kolchin polynomial of} $a$ \emph{over} $k$ (or the differential dimension polynomial of $a$ over $k$). Even though it is not generally a differential-birational invariant of $a$, it serves as an important measure of the transcendentality of $a$. For instance, if $k\subseteq L\subseteq  K$ are differential fields and $a\in K$, then $k\langle a\rangle$ is algebraically disjoint from $L$ over $k$ iff $\omega_{a/k}=\omega_{a/L}$. 

Let $x=(x_1,\dots,x_n)$ be an $n$-tuple of differential indeterminates for a fixed $n\geq 1$. If $P$ is a prime differential ideal of the differential polynomial ring $k\{x\}$, we define the Kolchin polynomial of $P$, $\omega_{P}$, as the Kolchin polynomial of a generic point $a\in K^n$ of $P$ over $k$. Similarly, if $V$ is an irreducible differential variety over $k$, we define the Kolchin polynomial of $V$, $\omega_{V}$, as the Kolchin polynomial of the prime differential ideal in $k\{x\}$ given by the vanishing of $V$. For an arbitrary differential variety (not necessarily irreducible) over $k$, the Kolchin polynomial of $V$ is 
$$\omega_V:=\max_{\leq}\left\{ \omega_W: W\textrm{ is an irreducible component of } V\right\}$$
where $\leq$ denotes the total ordering on the set of numerical polynomials by eventual domination; i.e., $p\leq q$ if and only if $p(s)\leq q(s)$ for all sufficiently large $s\in \NN$ (equivalently, the standard coefficients of $p$ are less than or equal to those of $q$ in the lexicographical order).

\begin{remark}\label{wellorder}
The Kolchin polynomial has the following two important properties (for more properties see Chapter II of \cite{KolchinDAAG}):
\begin{enumerate}
\item If $V\subseteq W$ are irreducible differential varieties over $k$ with generic points $a$ and $b$, respectively, then $\omega_{a/k}\leq \omega_{b/k}$ with equality if and only $V=W$.
\item The collection of Kolchin polynomials is well-ordered by eventual domination (see \cite{SitWell}).
\end{enumerate}
\end{remark}

\smallskip

Let $F(x,y)$ be a collection of differential polynomials over $k$, where $x$ and $y$ are tuples of differential indeterminates (not necessarily of the same length). Note that for each $a\in K^{|y|}$, the system $F(x,a)=0$ defines a differential variety $V_a$ over $k\langle a\rangle$. Any such collection of $V_a$'s will be called a (definable) family of differential varieties, we denote this by $(V_a)$. We say that the family has order $r$ if the differential polynomials in $F(x,y)$ have order at most $r$ in the variable $x$ and $r$ is minimal such (and similarly for the degree of the family). 

Given a definable family of differential varieties $(V_a)$ and a numerical polynomial $p$, the questions that drive the results in this paper are the following: 

\begin{question}
Is the set 
\begin{equation}\label{definable}
\{a \, : \, \omega _{V_a }= p \} 
\end{equation}
definable in the structure $(K,\D)$? And, can one prove this effectively (i.e., effectively produce a formula defining \eqref{definable})? 
\end{question}

We answer both questions affirmatively. The method of our proof uses recently established bounds for the order of characteristic sets of prime differential ideals to secure an effective value $s_0$ (which depends only on the order of the family) such that for $s\geq s_0$ we have that $\omega_{V_a}(s)$ equals 
\begin{equation}\label{trdeg}
\operatorname{trdeg}_k k(\d^{\xi}b_i:\ord\xi\leq s, i=1,\dots,n)
\end{equation}
where $b=(b_1,\dots,b_n)$ is a generic point of any irreducible component of $V_a$ of maximal Kolchin polynomial (among all components). We do this in Section~\ref{charpoly}. We then prove in Section~\ref{effective}, using classical algebro-geometric facts and effective results in the theory of prolongation spaces, that one can effectively determine those $a$ for which \eqref{trdeg} equals a fixed nonnegative integer for each natural number $s\geq s_0$. The main result follows more or less immediately from this (see Theorem~\ref{Boundy}).

Finally, in Section~\ref{applications}, we apply the definability result to show that the property of being weakly irreducible (meaning that there is only one component of maximal Kolchin polynomial) is too definable in families. We will also see that any given definable family $(V_a)$ admits only finitely many Kolchin polynomials; that is, the set $\{\omega_{V_a}: \text{as } a \text{ varies}\}$ is finite.

\begin{rem} 
Besides Kolchin polynomials, various other (model-theoretic) ranks have been studied in differentially closed fields, see for instance \cite{PongRank}. Some of these ranks are known \emph{not} to be definable in families. For instance, working in the ordinary case $m=1$, from work of the Japanese school of integrable systems and the trichotomy theorem for $DCF_0$, Pillay and Nagloo show that Morley rank (and also Lascar rank and the differential version of Krull dimension) are not definable in families \cite[Corollary 3.5]{nagloo2011algebraic}. Specifically, for $\alpha \in \m C$, the solution set $P_{II} (\alpha)$ to the second Painlev\'e equation is strongly minimal if and only if $\alpha \in \frac{1}{2} + \m Z$. So, in the family of differential varieties 
$$\{ P_{II} (\alpha) \, | \, \alpha\in K \},$$
the collection of those $\alpha$ such that $P_{II} (\alpha)$ has Morley rank one (in this case the Morley rank of each fibre is equal to its Lascar rank and differential Krull dimension) is not definable. 
\end{rem}

\section{On characteristic sets and numerical polynomials}\label{charpoly}

We carry forward the notation from the previous section. In particular, $k$ denotes a (small) differential subfield of our universal differentially closed field $(K,\D)$. 

As we pointed out in Fact \ref{kolpol}, for all large enough values of $s$, the Kolchin polynomial of an irreducible differential variety $V$ over $k$ is given by a transcendence degree calculation.
The minimum  $i_0\in\mathbb N$ such that 
$$\omega_V(s)=\operatorname{trdeg}_k k(\d^{\xi}b_i:\ord\xi\leq s, 1\leq i\leq n)$$ 
for all $s\geq i_0$, is known as the {\em Hilbert-Kolchin regularity} of $V$, where $(b_1,\ldots,b_n)$ is a generic point of $V$. Upper bounds of this regularity number were estimated for  quasi-regular ordinary differential systems \cite{DGMS}. In this section (see Theorem~\ref{sbound} below), we effectively compute an upper bound  on the Hilbert-Kolchin regularity for  irreducible components of differential  varieties depending only on $m$, $n$ and the maximal order of the system.

We need to recall the notion of volume for subsets of $\NN^m$. We let $\leq$ denote the product order on $\NN^m$; that is, $(u_1,\dots,u_m)\leq (v_1,\dots,v_m)$ means that $u_i\leq v_i$ for $i=1,\dots,m$. Given any $E\subseteq \NN^m$ and a nonnegative integer $s$, the \emph{volume of $E$ at level $s$} is
$$V_E(s)=\{\xi\in \NN^m: \ord\xi\leq s \text{ and } \xi\not\geq \eta \text{ for all }\eta\in E\}.$$
In \cite[Chapter 0, \S17]{KolchinDAAG}, Kolchin shows that for any $E\subseteq \NN^m$ there is a numerical polynomials $\omega_E(t)$ such that for sufficiently large $s\in \NN$
\begin{equation}\label{equalcomp}
\omega_E(s)=|V_E(s)|.
\end{equation}
Furthermore, $\deg \omega_E\leq m$; equality occurs if and only if $E$ is empty, in which case $\omega_E(t)=\binom{t+m}{m}$.

The next proposition yields a number $s_0$, depending only on $m$ and the set of minimal elements of $E$ with respect to the product order, such that for all $s\geq s_0$ equality \eqref{equalcomp} holds.

\begin{proposition}\label{agriculture}
Let $E\subseteq \NN^m$ and denote by $M$ the set of minimal elements of $E$ with respect to the product order (which is a finite set by Dickson's lemma). Let $D=0$ if $M$ is empty, and otherwise set 
$$\displaystyle D=\sum_{\xi\in M}\ord\xi.$$ 
Then, for all $s\geq m(D-1)$, we have $\omega_E(s)=|V_E(s)|$. 
\end{proposition}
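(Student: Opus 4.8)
The plan is to reduce first to the finite set $M$ and then to compute $|V_E(s)|$ head‑on by inclusion--exclusion. I would begin by observing that $V_E(s)=V_M(s)$ for every $s$: the condition ``$\xi\not\ge\eta$ for all $\eta\in E$'' is equivalent to ``$\xi\not\ge\mu$ for all $\mu\in M$'', since every element of $E$ dominates some minimal element. Hence $\omega_E=\omega_M$ and it suffices to work with $M$, which is finite by Dickson's lemma. (The case $M=\emptyset$ is immediate, as then $|V_M(s)|=\binom{s+m}{m}$ for all $s\ge 0$.)

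Next, writing $U(s)=\{\xi\in\NN^m:\ord\xi\le s\}$, which has cardinality $\binom{s+m}{m}$ for every $s\ge 0$, I would express
\[
|V_M(s)|=\binom{s+m}{m}-\Big|\bigcup_{\mu\in M}\{\xi\in U(s):\xi\ge\mu\}\Big|
\]
and expand the union by inclusion--exclusion. The key combinatorial fact is that for a nonempty $S\subseteq M$ one has $\bigcap_{\mu\in S}\{\xi\ge\mu\}=\{\xi\ge\zeta_S\}$, where $\zeta_S$ denotes the coordinatewise maximum (join) of the elements of $S$. Thus every term of the expansion reduces to counting $\{\xi\in\NN^m:\ord\xi\le s,\ \xi\ge\zeta\}$ for a fixed $\zeta\in\NN^m$.

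For such a count, the substitution $\xi=\zeta+\xi'$ shows it equals $\binom{s-\ord\zeta+m}{m}$ when $s\ge\ord\zeta$ and $0$ otherwise. The only delicate point is to pin down \emph{for which} $s$ this genuine count already agrees with the polynomial $\binom{s-\ord\zeta+m}{m}$. A direct inspection of the binomial shows that for $\ord\zeta-m\le s<\ord\zeta$ one of the linear factors in the numerator vanishes, so the polynomial equals $0$ and still matches the true count, whereas at $s=\ord\zeta-m-1$ the polynomial equals $(-1)^m\ne 0$ and disagreement sets in. Hence, for each fixed $\zeta$, the polynomial coincides with the count precisely for $s\ge\ord\zeta-m$.

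Assembling the pieces, $\omega_M(s)$ is the alternating sum over nonempty $S\subseteq M$ of the polynomials $\binom{s-\ord\zeta_S+m}{m}$, and it agrees with $|V_M(s)|$ as soon as every constituent term agrees, i.e.\ for $s\ge\max_S\big(\ord\zeta_S-m\big)=\ord\zeta_M-m$, the join being monotone in $S$ so that the maximum occurs at $S=M$. Finally, since each coordinate of $\zeta_M$ is a maximum rather than a sum, $\ord\zeta_M\le\sum_{\mu\in M}\ord\mu=D$, giving agreement for all $s\ge D-m$; and $D-m\le m(D-1)$ for every $m\ge 1$ (the difference being $(m-1)D\ge 0$), which yields the stated bound, in fact a sharper one. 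The main obstacle is nothing conceptual but the threshold bookkeeping in the single‑$\zeta$ count — verifying the $m$‑fold vanishing region where the polynomial silently matches the count — after which the rest is routine.
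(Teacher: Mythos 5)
Your proof is correct, but it takes a genuinely different route from the paper's. The paper argues by double induction on $m$ and $D$: choosing $\zeta\in M$ with last coordinate nonzero, it splits $V_E(s)$ into the slice with last coordinate $0$ (dropping to $m-1$ variables) and a shifted copy (dropping $D$ by at least one), obtains the recursion $|V_E(s)|=|V_{E_1}(s)|+|V_{E_2}(s-1)|$, transfers it to the polynomials, and lets the inductive thresholds $(m-1)(D-1)$ and $m(D-2)+1$ land below $m(D-1)$. You instead reduce to $M$ (correctly, since $V_E(s)=V_M(s)$), run inclusion--exclusion over nonempty $S\subseteq M$ using that $\bigcap_{\mu\in S}\{\xi\ge\mu\}=\{\xi\ge\zeta_S\}$ with $\zeta_S$ the join, and compute each term as $\binom{s-\ord\zeta_S+m}{m}$, pinning down exactly when that polynomial matches the true count ($s\ge\ord\zeta_S-m$, with the $m$-fold zero region doing the work). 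Your bookkeeping is right: $\ord\zeta_M\le D$ because a coordinatewise maximum is at most the corresponding sum, and $(m-1)D\ge0$ gives $D-m\le m(D-1)$. Your method yields an explicit closed formula for $\omega_E$ and the sharper threshold $s\ge D-m$; the paper's induction avoids the $2^{|M|}$ terms of the expansion and the single-$\zeta$ threshold analysis, at the cost of a weaker (but for their purposes sufficient) bound.
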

\begin{proof}
One could prove this using the arguments in the proof of \cite[Proposition~2.2.11]{KLMP}, but we prefer to give a more direct argument. We proceed by induction on $m$ and $D$. The cases $m=1$ or $D=0$ are obvious. So now assume $m>1$ and $D>0$. The result clearly holds when $M$ is empty (equivalently, $E$ is empty). So we may assume that there is $\zeta=(v_1,\dots,v_m)\in M$. Moreover, since $D>0$, $\zeta$ is not the zero tuple; without loss of generality, we assume that $v_m\neq 0$.

Consider the following sets
$$E_1=\{(u_1,\dots,u_{m-1})\in \NN^{m-1}: (u_1,\dots,u_{m-1},0)\geq \xi \text{ for some }\xi\in E\}$$
and
$$E_2=\{(u_1,\dots,u_m)\in \NN^m: (u_1,\dots,u_m+1)\geq \xi \text{ for some }\xi\in E\}.$$
Letting $M_i$ be the minimal elements of $E_i$ for $i=1,2$, we see that $\displaystyle \sum_{\xi\in M_1}\ord\xi\leq D$. On the other hand, the tuple $\eta=(v_1,\dots,v_m-1)$ is in $M_2$, as $\zeta\in M$ and $v_m>0$, and so, since $\ord \eta<\ord \zeta$, we have $\displaystyle \sum_{\xi\in M_2}\ord\xi\leq D-1$. By induction, we have $\omega_{E_1}(s)=|V_{E_1}(s)|$ for all $s$ such that $s\geq (m-1)(D-1)$. Also, $\omega_{E_2}(s-1)=|V_{E_2}(s-1)|$ for all $s$ such that $s-1\geq m(D-2)$. 

A straightforward computation yields
$$|V_E(s)|=|V_{E_1}(s)|+|V_{E_2}(s-1)|, \quad \text{ for all }s,$$
which in turn implies $\omega_E(t)=\omega_{E_1}(t)+\omega_{E_2}(t-1)$ (this is because equality holds for sufficiently large $s$). Now let $s\geq m(D-1)$. Putting the above equalities together, we get
$$
\omega_{E}(s)= |V_{E}(s)|
$$
 as desired.
\end{proof}

Letting $x=(x_1,\dots,x_n)$ be an $n$-tuple of differential variables, we recall that the \emph{canonical orderly ranking} $\unlhd$ on the set $\{\d^\xi x_i:\xi\in \NN^m, i=1,\dots,n\}$
is defined as: $\d_1^{u_1}\cdots\d_m^{u_m}x_i\unlhd\d_1^{v_1}\cdots\d_m^{v_m}x_j$ if and only if
\begin{equation}\label{canrank}
(\sum_k u_k,i,u_1,\dots,u_m)\leq_{\text{lex}} (\sum_k v_k,j,v_1,\dots,v_m)
\end{equation}
where $\leq_{\text{lex}}$ denotes the (left) lexicographic order. The \emph{leader} of a differential polynomial $f\in k\{x\}\setminus k$ is the highest $\d^\xi x_i$ that appears in $f$ with respect to $\unlhd$, and the order of $f$ is the order of its leader. Given any collection of differential polynomials $\Sigma \subset k\{x\}\setminus k$, by a leader of $\Sigma$ we mean a leader of one of its elements and by the order of $\Sigma$ we mean the maximum order among its elements. To avoid certain technicalities that are unnecessary for our purposes, we will not give the precise definition of a characteristic set. Let us just say that a \emph{characteristic set} of a prime differential ideal $P\subset k\{x\}$ is a finite subset of $P$ which is ``reduced'' and ``minimal'' with respect to the canonical orderly ranking $\unlhd$. We refer the reader to \cite[Chapter I]{KolchinDAAG} for further details. 

We can now state the following result of Kolchin's. 
\begin{fact}\cite[Chapter II, \S12]{KolchinDAAG}\label{use1}
Let $P$ be a prime differential ideal of $k\{x\}$ and $\Lambda$ a characteristic set of $P$. If for each $1\leq i\leq n$ we denote by $E_i$ the set of all $\xi\in\NN^m$ such that $\d^\xi x_i$ is a leader of $\Lambda$, then
$$\omega_{P}(t)=\sum_{i=1}^n \omega_{E_i}(t).$$
In particular, the Kolchin polynomial $\omega_{P}$ is completely determined by the set of leaders of any characteristic set of $P$.
\end{fact}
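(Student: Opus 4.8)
The plan is to prove the identity fibre-by-level, by showing that for a generic point $a=(a_1,\dots,a_n)$ of $P$ the \emph{parametric} derivatives of order at most $s$ form a transcendence basis of $k(\d^\xi a_i:\ord\xi\leq s)$ over $k$, and that their number is exactly $\sum_{i=1}^n |V_{E_i}(s)|$. Here I call $\d^\xi x_i$ parametric when $\xi\not\geq \eta$ for every $\eta\in E_i$, i.e.\ when $\d^\xi x_i$ is neither a leader of $\Lambda$ nor a proper derivative of one; by the very definition of the volume set, the number of parametric $\d^\xi x_i$ with $\ord\xi\leq s$ is precisely $|V_{E_i}(s)|$, and summing over $i$ gives the right-hand count.

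For algebraic independence I would argue by contradiction: a nonzero polynomial relation among the parametric derivatives $\d^\xi a_i$ produces a nonzero $g\in P$ involving only parametric derivatives. Such a $g$ contains no leader of $\Lambda$ and no proper derivative of a leader, hence it is \emph{reduced} with respect to $\Lambda$; but a defining property of a characteristic set of a prime differential ideal is that $P$ contains no nonzero reduced element. Thus $g=0$, a contradiction, and the parametric derivatives are algebraically independent over $k$.

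For the spanning property I would induct on the orderly ranking $\unlhd$. If $\d^\xi x_i$ is not parametric, choose a leader $\d^\eta x_i$ of some $A\in\Lambda$ with $\xi\geq\eta$. When $\xi=\eta$, the relation $A(a)=0$ already exhibits $\d^\eta a_i$ as algebraic over the lower-ranked derivatives occurring in $A$. When $\xi>\eta$, the prolongation $\d^{\xi-\eta}A$ has $\d^\xi x_i$ as its leader, occurring \emph{linearly} with coefficient the separant $S_A$ of $A$; by the standard fact that separants of a characteristic set of a prime differential ideal do not lie in the ideal, $S_A\notin P$, so $S_A(a)\neq 0$, and the relation $\d^{\xi-\eta}A(a)=0$ expresses $\d^\xi a_i$ rationally in terms of strictly lower-ranked derivatives. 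The crucial point is that $\unlhd$ is \emph{orderly}: since order is the first lexicographic coordinate in \eqref{canrank}, every derivative lower-ranked than $\d^\xi x_i$ has order at most $\ord\xi\leq s$. By the induction hypothesis each such derivative is algebraic over the parametric derivatives of order $\leq s$, hence so is $\d^\xi a_i$. This establishes that the parametric derivatives of order $\leq s$ form a transcendence basis, so that
$$\operatorname{trdeg}_k k(\d^\xi a_i:\ord\xi\leq s)=\sum_{i=1}^n |V_{E_i}(s)| \quad\text{for every } s\in\NN.$$

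Finally I would combine this with the results already available. For all large $s$ the left-hand side equals $\omega_P(s)$ by Fact~\ref{kolpol}, while the right-hand side equals $\sum_{i=1}^n\omega_{E_i}(s)$ by \eqref{equalcomp}. Two numerical polynomials agreeing for all sufficiently large $s$ coincide, yielding $\omega_P(t)=\sum_{i=1}^n\omega_{E_i}(t)$. I expect the spanning step to be the main obstacle: it requires the full reduction machinery for characteristic sets together with the non-vanishing of separants at the generic point, and it is precisely the orderly nature of the ranking that confines each reduction to order $\leq s$, so that the transcendence degree is controlled level by level rather than only asymptotically.
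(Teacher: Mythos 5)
Your proof is correct, and it is essentially the standard (Kolchin) argument: the paper itself states this as a cited Fact without proof, but the very same level-by-level computation --- algebraic independence of the parametric derivatives because $P$ contains no nonzero element reduced with respect to $\Lambda$, plus reduction of the non-parametric derivatives to lower rank via prolongations of $\Lambda$ --- is exactly what the authors redeploy inside the proof of Theorem~\ref{sbound}. The only gloss worth making explicit is in your $\xi=\eta$ case: there you need the initial $I_A\notin P$, so that $A$ specializes at the generic point to a nonconstant polynomial in its leader, by the same reasoning you invoke for the separants in the $\xi>\eta$ case.
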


We will make use of a recent upper bound for the order of a characteristic set obtained in \cite{GLS}. Let $A:\NN\times\NN\to \NN$ be the Ackermann function. From this (nonprimitive recursive) function, we build $C_{r,m}^n$, for $r\geq 0$ and $n,m\geq 1$, as follows:
$$C_{0,m}^1=0, \quad\; C_{r,m}^1=A(m-1,C_{r-1,m}^1), \quad \text{ and } \quad C_{r,m}^n=C_{C_{r,m}^{n-1},m}^1.$$
For example, a straightforward computation yields
$$C_{r,1}^n=r, \quad\; C_{r,2}^n=2^n r \quad \text{ and }\quad C_{r,3}^1=3(2^r-1).$$

From \cite[Proposition 6.1]{GLS} we have
\begin{fact}\label{boundchar}
Let $\Sigma\subset K\{x_1,\dots,x_n\}$ be of order at most $r$ and $P$ any of its (minimal) prime components. Then a characteristic set for $P$ has order at most $C_{r,m}^n$.
\end{fact}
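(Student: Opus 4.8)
Since Fact~\ref{boundchar} is quoted from \cite[Proposition~6.1]{GLS}, I would not reprove it in full, but here is the strategy I would follow. The plan is to reduce the order bound to a purely combinatorial bound on the length and maximal order of certain sequences of elements of $\NN^m$, and then to run a double induction on the number of variables $n$ and on the order $r$ that mirrors the recursive definition of $C_{r,m}^n$. The starting observation is that, by Fact~\ref{use1}, the only data attached to a characteristic set $\Lambda$ of a prime $P$ that matters is its collection of leaders $E_i \subseteq \NN^m$, and the order of $\Lambda$ is just the maximum of $\ord\xi$ over all leaders $\xi$. So it suffices to bound how large the coordinates of a minimal leader can grow during a characteristic-set computation.

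The engine of the argument is the fact that, as one runs the differential analogue of a Gr\"obner/Rosenfeld--Gr\"obner reduction, each newly produced leader arises from the input data by applying derivative operators and pseudo-reductions, and no leader is a proper derivative of (equivalently, $\geq$ in the product order than) a previously chosen one. Thus the successive leaders attached to a single variable form an \emph{antichain sequence} in $(\NN^m,\leq)$: a sequence $\xi_1,\xi_2,\dots$ in which $\xi_j\not\geq \xi_i$ for $i<j$, and whose growth in $\ord$ is controlled by the starting order $r$. Bounding the order of the characteristic set therefore amounts to bounding the maximal order attainable along such a controlled antichain (``bad'') sequence. The single-variable case $C_{r,m}^1$ is the heart of the matter: I would prove by induction on $r$ that this maximal order is bounded by $A(m-1,C_{r-1,m}^1)$, the level-$(m-1)$ Ackermann function entering because the combinatorial complexity of a controlled antichain sequence over $\NN^m$ is Ackermannian in the dimension $m$, while each additional unit of allowed order $r$ triggers one further application of $A(m-1,\cdot)$. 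The base case $m=1$ is linear ($C_{r,1}^n=r$), matching $A(0,\cdot)$.

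For several variables I would argue by induction on $n$. Having bounded the leaders attached to the first $n-1$ variables by $C_{r,m}^{n-1}$, I would treat the remaining reductions that produce leaders in the last variable as a fresh single-variable problem, but now with effective starting order $C_{r,m}^{n-1}$ in place of $r$; this yields exactly $C_{r,m}^n=C_{C_{r,m}^{n-1},m}^1$, the recursion in the definition. Finally, passing from the full system $\Sigma$ to a minimal prime component $P$ requires only the remark that a characteristic decomposition of $\Sigma$ produces characteristic sets of all its components simultaneously, without any leader ever exceeding the orders predicted along the way.

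The main obstacle, and the place where \cite{GLS} does the genuine work, is the faithful translation between the differential-algebraic reduction process and the combinatorial bad-sequence bound: one must verify that every leader produced during the computation really does arise by bounded derivative steps from the input (so that the antichain and controlled-growth hypotheses are legitimately satisfied) and that the pseudo-remainder operations never secretly raise the order past the predicted Ackermannian threshold. The second delicate point is establishing the combinatorial length bound itself, namely that iterating the dimension-reduction from $\NN^m$ to $\NN^{m-1}$ produces precisely the $A(m-1,\cdot)$ behaviour rather than merely \emph{some} recursive bound; for this I would import the known length-function estimates for antichain sequences over $\NN^m$ rather than redo them from scratch.
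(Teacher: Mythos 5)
The paper does not prove this statement at all: it is imported verbatim as \cite[Proposition 6.1]{GLS}, so there is no internal proof to compare against, and your decision to defer to that reference matches exactly what the authors do. Your sketch of the argument in \cite{GLS} --- reducing the order bound to length/order bounds for controlled antichain (bad) sequences in $\NN^m$, with the Ackermann function entering through the known length-function estimates and the double induction on $n$ and $r$ tracking the recursion defining $C_{r,m}^n$ --- is a faithful high-level account of how that result is actually obtained, so nothing further is needed here.
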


We can now prove the main result of this section.

\begin{theorem}\label{sbound}
Let $V\subseteq K^n$ be a differential variety over $k$ (not necessarily irreducible) given by differential polynomials of order at most $r$. Set 
$$s_0:=m\,C_{r,m}^n\binom{C_{r,m}^n+m-1}{C_{r,m}^n}-m.$$ 
Then, for all $s\geq s_0$ and any irreducible component $W$ of $V$, we have
$$\omega_W(s)=\operatorname{trdeg}_k k(\d^{\xi}b_i:\ord\xi\leq s, i=1,\dots,n)$$
where $b=(b_1,\dots,b_n)$ is a generic point of $W$.
\end{theorem}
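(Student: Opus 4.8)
The plan is to reduce the Hilbert--Kolchin regularity of each irreducible component $W$ to a volume computation and then apply Proposition~\ref{agriculture} together with the characteristic set order bound of Fact~\ref{boundchar}. First I would pass to a fixed irreducible component $W$ of $V$ with generic point $b=(b_1,\dots,b_n)$, and let $P\subset k\{x\}$ be the prime differential ideal of $W$. The desired equality
$$\omega_W(s)=\operatorname{trdeg}_k k(\d^{\xi}b_i:\ord\xi\leq s,\ i=1,\dots,n)$$
is exactly the statement that $s$ lies beyond the Hilbert--Kolchin regularity of $W$, so the goal is to produce an explicit $s_0$ valid for \emph{every} component $W$ of $V$ simultaneously.

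\medskip

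The key observation is that Fact~\ref{use1} expresses $\omega_P(t)=\sum_{i=1}^n\omega_{E_i}(t)$, where $E_i\subseteq\NN^m$ records the multi-indices $\xi$ with $\d^\xi x_i$ a leader of a characteristic set $\Lambda$ of $P$. Running the proof of Fact~\ref{use1} I would check that in fact, for each fixed $s$, the transcendence degree in \eqref{trdeg} equals $\sum_{i=1}^n |V_{E_i}(s)|$: the derivatives $\d^\xi x_i$ with $\ord\xi\le s$ and $\xi$ not a multiple of any leader are algebraically independent at the generic point, while the remaining ones are algebraic over them. Hence the regularity of $W$ is bounded by the largest $s_0$ needed to make $\omega_{E_i}(s)=|V_{E_i}(s)|$ hold for all $i$, and this is precisely what Proposition~\ref{agriculture} controls.

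\medskip

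Thus I would apply Proposition~\ref{agriculture} to each $E_i$. The relevant quantity there is $D_i=\sum_{\xi\in M_i}\ord\xi$, where $M_i$ is the set of minimal elements of $E_i$, i.e. the set of leaders in the variable $x_i$. By Fact~\ref{boundchar}, every leader of $\Lambda$ has order at most $C_{r,m}^n$, so each element of $M_i$ has $\ord\xi\le C_{r,m}^n$. To bound $D_i$ I would also bound the \emph{number} of minimal elements: the elements of $M_i$ form an antichain in $\NN^m$ all of whose coordinates are at most $C_{r,m}^n$, and any such antichain of multi-indices of order $\le C_{r,m}^n$ has cardinality at most $\binom{C_{r,m}^n+m-1}{C_{r,m}^n}$ (the number of multi-indices of a fixed order, the largest level). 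Combining, $D_i\le C_{r,m}^n\binom{C_{r,m}^n+m-1}{C_{r,m}^n}$, and Proposition~\ref{agriculture} gives $\omega_{E_i}(s)=|V_{E_i}(s)|$ for all $s\ge m(D_i-1)$. Taking $s_0=m\,C_{r,m}^n\binom{C_{r,m}^n+m-1}{C_{r,m}^n}-m$ then works uniformly, since it dominates $m(D_i-1)$ for every $i$ and every component $W$ (the bound on leaders from Fact~\ref{boundchar} applies to all minimal prime components of the system at once).

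\medskip

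The main obstacle I anticipate is justifying the antichain cardinality estimate cleanly: one must argue that an antichain of multi-indices each of order at most $C_{r,m}^n$ cannot exceed the size of a single top-order level, and verify that $\binom{C_{r,m}^n+m-1}{C_{r,m}^n}$ is the correct count of multi-indices of that order in $m$ variables. A secondary point requiring care is the transcendence-degree bookkeeping in the reduction to volumes: one needs the generic point $b$ to realize algebraic independence of exactly the non-derivative-of-leader monomials, which is where primality of $P$ and the reducedness/minimality of the characteristic set are used. Neither step is deep, but both must be stated precisely to make the uniform constant $s_0$ honest.
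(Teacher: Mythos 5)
Your proposal is correct and follows essentially the same route as the paper: invoke Fact~\ref{use1} to reduce to the volumes $|V_{E_i}(s)|$, bound $\sum_{\xi\in M_i}\ord\xi$ via Fact~\ref{boundchar} and the antichain count $\binom{C_{r,m}^n+m-1}{C_{r,m}^n}$, and apply Proposition~\ref{agriculture}, together with the same transcendence-degree bookkeeping at a generic point using reducedness of the characteristic set. The two points you flag as needing care are exactly the ones the paper handles (the latter explicitly, the former by the same brief assertion), so nothing is missing.
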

\begin{proof}
Let $\Lambda$ be a characteristic set of the prime differential ideal $P$ in $k\{x_1,\dots,x_n\}$ corresponding to $W$. By Fact \ref{use1}, 
$$\omega_{W}(t)=\sum_{i=1}^n \omega_{E_i}(t).$$
where $E_i$ is the set of all $\xi\in\NN^m$ such that $\d^\xi x_i$ is a leader of $\Lambda$. By Fact~\ref{boundchar}, the elements of $\Lambda$  have order at most $C_{r,m}^n$. Hence, the elements of the set $M_i$ of minimal elements of $E_i$ (with respect to the product order) have order at most $C_{r,m}^n$. Since the number of $m$-tuples of order $s\in\NN$ is $\binom{s+m-1}{s}$, we get that the number of elements in $M_i$ is at most 
$$\displaystyle \binom{C_{r,m}^n+m-1}{C_{r,m}^n},$$
and so
$$\sum_{\xi\in M_i}\ord \xi\leq C_{r,m}^n \binom{C_{r,m}^n+m-1}{C_{r,m}^n}.$$

Suppose $s\geq s_0$. Proposition~\ref{agriculture} now yields
$\omega_W(s)=\sum_{i=1}^n|V_{E_i}(s)|$. Let $b=(b_1,\dots,b_n)$ be a generic point of $W$. 
Note that the set 
$$N_s=\{\delta^{\xi}b_i: \xi\in V_{E_i}(s),\, 1\leq i\leq n\}$$ 
is algebraically independent over $k$ (because $\Lambda$ is a characteristic set of $P$). Also, if $\xi$ is above or equal to an element of $E_i$, then $\d^\xi b_i$ is algebraic over 
$$k(\d^\zeta b_j: \d^\zeta x_j<\d^\xi x_i);$$ 
and so, by induction and transitivity of algebraic field extensions, $\d^\xi b_i$ is algebraic over $N_s$. We thus have that $$\sum_{i=1}^n|V_{E_i}(s)|=\operatorname{trdeg}_k k(\d^{\xi}b_i:\ord\xi\leq s, i=1,\dots,n),$$
from which the desired equality follows.
\end{proof}

\begin{remark}
Note that in the ordinary case (i.e., $m=1$) the value of $s_0$ in Theorem~\ref{sbound} reduces to $r-1$. 
This  special case of the bound was obtained in \cite[Theorem~12]{DGMS} for quasi-regular ordinary differential systems. 
To the authors' knowledge, up until now there was no such bound established for the partial  differential case.
\end{remark}

We will also need the following bound that witnesses eventual domination of the Kolchin polynomials of the components of a differential variety. 

\begin{proposition}\label{domination}
Let $V\subseteq K^n$ be a differential variety over $k$ (not necessarily irreducible) given by differential polynomials of order at most $r$. Set 
$$s_1:=n\,2^{m+1}\, m!\, D^m+1$$
where 
$$D=C_{r,m}^n\binom{C_{r,m}^n+m-1}{C_{r,m}^n}.$$ 
Suppose $W_1$ and $W_2$ are components of $V$. Then, $\omega_{W_1}> \omega_{W_2}$, with respect to eventual domination, if and only if, for all $s> s_1$, we have $\omega_{W_1}(s)> \omega_{W_2}(s)$.
\end{proposition}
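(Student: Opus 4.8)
The plan is to reduce the whole statement to a question about where a single eventually-positive numerical polynomial first becomes, and stays, positive, and then to bound that threshold in terms of its coefficients. The easy implication is the ``if'' direction: if $\omega_{W_1}(s) > \omega_{W_2}(s)$ for every $s > s_1$, then this holds for all sufficiently large $s$, which is exactly eventual domination $\omega_{W_1} > \omega_{W_2}$. For the substantive direction I would set $q := \omega_{W_1} - \omega_{W_2}$, a numerical polynomial of degree at most $m$ with integer standard coefficients, say $q(t) = \sum_{j=0}^d c_j\binom{t+j}{j}$ with $c_d\neq 0$. Eventual domination forces $q(s) > 0$ for large $s$, hence $c_d\geq 1$. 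The goal is then precisely to show that $q(s) > 0$ already for every $s > s_1$, i.e.\ that every real root of $q$ is at most $s_1$.

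The elementary engine is the following positivity estimate. Suppose $|c_j|\leq B$ for all $j$ (with $c_d\geq 1$ and $d\geq 1$). Using $\binom{t+j}{j} > 0$ for $t\geq 0$, the hockey-stick identity $\sum_{j=0}^{d-1}\binom{t+j}{j} = \binom{t+d}{d-1}$, and $\binom{t+d}{d} = \tfrac{t+1}{d}\binom{t+d}{d-1}$, one obtains
$$q(t) \;\geq\; \binom{t+d}{d} - B\binom{t+d}{d-1} \;=\; \binom{t+d}{d-1}\Bigl(\frac{t+1}{d}-B\Bigr),$$
which is strictly positive as soon as $t > dB-1$; since $d\leq m$ this holds for all $t\geq mB$ (and the case $d=0$ is trivial since then $q\equiv c_0\geq 1$). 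Thus it suffices to produce a bound $B$ on the standard coefficients of $q$ with $mB < s_1$.

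To bound $B$ I would decompose via Fact~\ref{use1}: $\omega_{W_i}(t) = \sum_{l=1}^n \omega_{E_l^{(i)}}(t)$, where $E_l^{(i)}$ is the leader set of $x_l$ for a characteristic set of the prime ideal of $W_i$. By Fact~\ref{boundchar} and the estimate carried out in the proof of Theorem~\ref{sbound}, the minimal elements of each $E_l^{(i)}$ have orders summing to at most $D$. Hence $q$ is a $\ZZ$-linear combination of $2n$ volume polynomials $\omega_E$, each with $\sum_{\xi\in M}\ord\xi\leq D$, so $B\leq 2n\,B_0$, where $B_0$ bounds the absolute values of the standard coefficients of a single such $\omega_E$. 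The remaining content is the bound $B_0\leq 2^m(m-1)!\,D^m$: plugging it in gives $mB\leq 2mn\,B_0\leq n\,2^{m+1}m!\,D^m < s_1$, finishing the argument.

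The main obstacle is exactly this last coefficient bound for a single volume polynomial $\omega_E$. I expect it cannot be gotten by a naive induction on $m$ through the recursion $\omega_E(t) = \omega_{E_1}(t) + \omega_{E_2}(t-1)$ of Proposition~\ref{agriculture}: a unit shift $t\mapsto t-1$ can, coefficient-by-coefficient, roughly double the bound, and since the recursion chains up to $D$ shifts this yields only an exponential-in-$D$ estimate, whereas the truth must be polynomial of degree $m$ in $D$ (otherwise the stated $s_1\sim D^m$ would be too small). The correct route must instead exploit the global telescoping of the summation $\omega_E(t)=\sum_k p_k(t-k)$ over lower-dimensional slices $p_k$ --- the discrete-integration phenomenon whereby a degree-$(m-1)$ summand summed over $O(D)$ shifts becomes a degree-$m$ polynomial with coefficients of size $O_m(D^m)$ --- or else compare $\omega_E$ against the extremal single minimal element of order $D$, for which $\omega_E(t) = \binom{t+m}{m}-\binom{t-D+m}{m}$ has standard coefficients of size $O(D^m/m!)$. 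Pinning down the constants so that $mB$ lands below the stated $s_1$ is where the genuine work lies.
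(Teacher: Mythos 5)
There is a genuine gap, and you have in fact located it yourself: the coefficient bound. Everything up to that point is sound --- the reduction to showing that $q=\omega_{W_1}-\omega_{W_2}$ has no root beyond $s_1$, and your positivity engine via the hockey-stick identity $\sum_{j=0}^{d-1}\binom{t+j}{j}=\binom{t+d}{d-1}$, which is a perfectly good (arguably cleaner) substitute for what the paper actually does, namely pass to the monomial basis and invoke Cauchy's bound on polynomial roots. But the argument is not a proof until the bound $B$ on the standard coefficients of $q$ is actually established, and your proposal ends by conceding that neither the naive induction through Proposition~\ref{agriculture} nor the inclusion--exclusion over minimal elements obviously delivers a bound polynomial in $D$; the claimed estimate $B_0\leq 2^m(m-1)!\,D^m$ per volume polynomial is left as ``the remaining content.'' That remaining content is precisely the nontrivial input.

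The paper does not derive this estimate from scratch either: it imports it from \cite[Corollary~3.3]{LS}, which gives $|a_{i,j}|\leq n\,D^m$ directly for the standard coefficients of the full Kolchin polynomial $\omega_{W_i}=\sum_{j=0}^m a_{i,j}\binom{t+j}{j}$ (so the detour through the $2n$ separate volume polynomials $\omega_{E_l^{(i)}}$ is unnecessary). From there the paper converts to monomial coefficients, bounded by $n\,2^m m!\,D^m$, notes that $m!$ times each monomial coefficient is an integer (so the leading coefficient of $q$ is at least $1/m!$ in absolute value), and applies Cauchy's root bound to land exactly at $s_1$. If you take the cited coefficient bound as given, your binomial-basis route also closes: $B\leq 2nD^m$, and $q(s)>0$ for $s\geq mB=2mnD^m$, which is well below $s_1$. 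So the verdict is: correct skeleton, alternative and workable finishing device, but the load-bearing estimate is missing and in the paper it is supplied by an external reference rather than by the kind of self-contained argument you were reaching for.
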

\begin{proof}
Write the Kolchin polynomial of $W_i$ in (standard) numerical form; that is,
$$\omega_{W_i}=\sum_{j=0}^m a_{i,j}\binom{t+j}{j}$$
where the $a_{i,j}$'s are integers. By \cite[Corollary 3.3]{LS}, we have
\begin{equation}\label{usefor}
|a_{i,j}|\leq n\, D^m \quad \text{ for }j=0,\dots,m.
\end{equation}
It is easy to show, by induction say, that if one writes $(t+1)\cdots(t+m)=\sum_{j=0}^m c_jt^j$, then $c_j\leq 2^{m-1}m!$. From this it easy to show, by induction and using \eqref{usefor}, that if we write $\omega_{W_i}$ as $\sum_{j=0}^m b_{i,j}x^j$, then $|b_{i,j}|\leq n2^mm! D^m$. Also note that, because the $a_{i,j}$'s  are integers, all the products $b_{i,j}\cdot m!$ are integers.

Using the above observations and Cauchy's bound on polynomial roots (in terms of the coefficients of the given polynomial), we get that any root of the difference $\omega_{W_1}-\omega_{W_2}$ must be bounded (in absolute value) by $n\,2^{m+1}\,m!\, D^m+1$, that is, by $s_1$. Thus, if $\omega_{W_1}$ eventually dominates $\omega_{W_2}$, then, for $s>s_1$, we must have $\omega_{W_1}(s)> \omega_{W_2}(s)$.
\end{proof}

\section{Definability of Kolchin polynomials} \label{effective}

In this section we prove the main result of the paper, namely that having a prescribed Kolchin polynomial is a definable property in families of differential varieties. For our proof, we will make use of prolongation spaces. We use the notation fixed in previous sections. 

Given an $n$-tuple $b$ from our universal differential field $(K,\D)$ and $s$ a nonnegative integer, we let $\nabla _s (b)$ be the tuple in $K^{n\cdot\binom{m+s}{m}}$ consisting of $b$ and its derivatives of order at most $s$. The ordering of the tuple is not particularly important, but for convention, we will order the tuple with respect to the canonical orderly ranking as in \eqref{canrank}. 

\begin{defn} 
Given a differential variety $V \subseteq K^n$ and a nonnegative integer $s$, the prolongation of $V$ is defined as 
$$B_s (V) := \nabla _s (V)^{\operatorname{Z-cl}} \subseteq K^{n \cdot \binom{m+s}{m}}$$ 
where $*^{\operatorname{Z-cl}}$ denotes Zariski-closure. 
\end{defn}

In Proposition~\ref{Definv1} below we prove that if $(V_a)$ is a definable family of differential varieties, then, for each $s$, the family $(B_s(V_a))$ is also definable. Our proof uses the following fact, which follows from results in \cite{FLS}.

\begin{fact}\label{degbound} Recall that $*^{\operatorname{Z-cl}}$ denotes Zariski-closure.
\begin{enumerate}
\item If $V\subseteq K^n$ is a differential variety given by differential polynomials of degree $d$ and order $R$, then there is $D=D(d,R,m,n)$, which can be effectively computed, such that $\deg V^{\operatorname{Z-cl}}\leq D$.
\item Given a definable family $(V_a)$ of differential varieties, the family $((V_a)^{\operatorname{Z-cl}})$ is definable as well. Moreover, one can effectively compute a formula defining this family.
\end{enumerate}
\end{fact}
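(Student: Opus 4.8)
The plan is to reduce both claims to \emph{algebraic} elimination over the prolongation spaces $B_s$, extracting the degree bound of part~(1) as the combinatorial ingredient that renders the elimination of part~(2) uniform in the parameter.

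For part~(1), observe first that $V^{\operatorname{Z-cl}}$ is algebraically accessible from any prolongation: since $\pi_0(\nabla_s(V))=V$ for the coordinate projection $\pi_0\colon K^{n\binom{m+s}{m}}\to K^n$ onto the original variables, continuity gives $\pi_0(B_s(V))^{\operatorname{Z-cl}}=V^{\operatorname{Z-cl}}$, and since a coordinate projection cannot increase the degree of a Zariski closure we get $\deg V^{\operatorname{Z-cl}}\le \deg B_N(V)$ for every $N$. Concretely, prolonging the defining system $F$ to order $N$ produces a purely algebraic system in the jet variables $\nabla_N(x)$, each prolonged equation $\delta^{\eta}f_j$ still of degree at most $d$ — applying a derivation to a degree-$d$ polynomial keeps the degree at $d$ in the enlarged variable set, as it only introduces the fresh top-order jets linearly. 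The set $V^{\operatorname{Z-cl}}$ is then cut out by the $x$-elimination ideal of (the radical of) this algebraic system, whose degree I would bound by an effective B\'ezout/Nullstellensatz estimate in $n\binom{m+N}{m}$ variables, each equation being of degree at most $d$; this yields an effective $D=D(d,R,m,n)$.

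The crux — and the step I expect to be the main obstacle — is to certify an \emph{effective} prolongation level $N=N(d,R,m,n)$ at which the finite algebraic elimination already captures the full differential Zariski closure, i.e.\ at which $\sqrt{[F]}\cap K[x]$ and the $x$-elimination ideal of the order-$N$ prolongation have the same zero set. This is where the genuinely differential-algebraic content lives, and it is controlled by the bound $C_{R,m}^n$ of Fact~\ref{boundchar}: once one prolongs past the order of the characteristic sets of all prime components of $[F]$, no further differentiation can enlarge the relevant elimination ideal, so $N$ is effective. Making this precise, and simultaneously keeping the degree from accumulating uncontrollably across the prolongation steps, is the technical heart; it is here that I would lean on the prolongation degree estimates of \cite{FLS}, the remainder being classical effective commutative algebra.

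Part~(2) then follows by carrying the parameter $a$ through the same construction. The prolonged system is an algebraic family $(\widetilde{V}_{N,a})$ whose equations have coefficients polynomial in $a$, and both Zariski closure and the projection $\pi_0$ are operations definable in the $\operatorname{ACF}$-reduct uniformly in parameters; effective quantifier elimination in algebraically closed fields thus yields a formula defining $((V_a)^{\operatorname{Z-cl}})$. Uniformity in $a$ and effectivity are guaranteed by part~(1): the bound $D=D(d,R,m,n)$ is independent of $a$, so it caps the degree, number, and combinatorial shape of the polynomials that can appear in the eliminant, ensuring that a single formula — producible algorithmically — works across the whole family rather than branching on the parameter.
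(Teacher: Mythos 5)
For part~(1) the paper gives no argument at all: it cites \cite{FLS} (Corollary 4.5 and Remark 4.7(2)) verbatim, so your sketch --- prolong, control the degree of the prolonged system, eliminate, and certify an effective elimination order --- is a reconstruction of what happens inside that reference rather than a competing proof. Two cautions there. Differentiating a degree-$d$ differential polynomial does indeed keep total degree $d$ in the jet variables, but the step you call the crux is not controlled by the characteristic-set order bound of Fact~\ref{boundchar} alone: what is actually needed is an effective differential Nullstellensatz, i.e.\ an order $N=N(d,R,m,n)$ such that every $g\in k[x]$ vanishing on $V$ already lies in the radical of the order-$N$ algebraic prolongation ideal. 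Bounding the order of characteristic sets of the prime components does not by itself stabilize the elimination ideal; the required statement is the content of \cite{FLS} and \cite{GLS}.

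For part~(2) your route is genuinely different from the paper's. You propose to realize $(V_a)^{\operatorname{Z-cl}}$ as the Zariski closure of the constructible set $\pi_0(\widetilde V_{N,a})$ and then invoke uniform quantifier elimination in the ring reduct. The paper instead uses the degree bound $D$ from part~(1) together with \cite{Heintz} and Kronecker's theorem \cite{Ritt} to know that $(V_a)^{\operatorname{Z-cl}}$ is cut out by $n+1$ polynomials of degree at most $D$, and then writes down, in the language of differential rings, the condition that a candidate coefficient tuple defines the \emph{minimal} Zariski-closed set of that bounded complexity containing $V_a$; this avoids elimination entirely and needs no uniform Nullstellensatz in this step. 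Your route can be made to work, but it has two soft spots you should not leave implicit. First, $V_a$ is not a constructible set in the ring reduct, so the identity $\overline{\pi_0(\widetilde V_{N,a})}=(V_a)^{\operatorname{Z-cl}}$ must be proved, uniformly in $a$: the inclusion $\supseteq$ is exactly the effective Nullstellensatz point above, and the parameters of the prolonged system are polynomials in $\nabla_N(a)$, not in $a$ itself. Second, ``Zariski closure of a constructible family is definable in families'' is itself a nontrivial fact of precisely the kind being proved; it is usually established either by the same bounded-degree-plus-minimality device the paper uses or via definability of irreducible components as in Fact~\ref{factdef} and \cite{EDC}. Once you supply those two points your argument closes, but at that stage the last step essentially rederives the paper's mechanism.
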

\begin{proof}
(1) This is precisely the content of \cite[Corollary 4.5 and Remark~4.7(2)]{FLS}. Note that an explicit formula is provided there that computes $D(d,R,m,n)$.

(2) Suppose the family $(V_a)$ has degree $d$ and order $R$ (as defined in Section~\ref{intro}). This means that, for any $a$, the differential variety $V_a$ is given by  differential polynomials of degree $d$ and order $R$. Also, assume each $V_a$  is in $K^n$. By (1), $\deg (V_a)^{\operatorname{Z-cl}}\leq D$, where $D$ only depends on the data $(d,R,m,n)$. So, by \cite[Proposition 3]{Heintz} and Kronecker's theorem (see \cite[Chapter 7,\S17]{Ritt}), we can find $(n +1)$-many polynomials $g_0,...,g_{n}$ of degree at most $D$ such that $(V_a)^{\operatorname{Z-cl}}$ is defined by $g_0=\cdots=g_{n}=0$.  

A collection of polynomials defining $(V_a)^{\operatorname{Z-cl}}$ uniformly can be obtained by fixing $(n+1)$-many generic polynomials in $z$ of the form $G_u(z)=\{g_0(z,u), ..., g_{n}(z,u)\}$ of degree $D$ and noting that the set of points $(a,b)$ such that the $g_i(z,b)$'s define $(V_a)^{\operatorname{Z-cl}}$ is the same as the set of points $(a,b)$ such that the points of $V_a$ are solutions to the $g_i(z,b)$'s and there is no other Zariski-closed set defined by $(n+1)$-many polynomials of degree at most $D$ with this property contained in the solution set of the $g_i(z,b)$'s. The latter condition is easily expressible by a formula in the language of differential rings.
\end{proof}

Part (2) of the above fact is referred to as Zariski-closure being definable in families of differential varieties. As a result we can prove

\begin{prop} \label{Definv1} 
Let $(V_a)$ be a definable family of differential varieties with each $V_a$ in $K^n$, and let $s$ a nonnegative integer. Then $(B_s (V_a))$ has the structure of a definable family, and one can effectively compute a formula defining this family.
\end{prop}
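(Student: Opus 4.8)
The plan is to realize the prolongation $\nabla_s(V_a)$ not merely as the image of a definable set but as a genuine differential variety in the larger affine space $K^{n\cdot\binom{m+s}{m}}$, and then to invoke the fact that Zariski-closure is definable in families (Fact~\ref{degbound}(2)).

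First I would fix defining data: say the family $(V_a)$ is given by differential polynomials $F(x,y)$ of degree $d$ and order $R$, so that $V_a$ is cut out by $F(x,a)$ in $K^n$. I introduce coordinates $z=(z_{i,\xi})$, indexed by $1\le i\le n$ and by multi-indices $\xi\in\NN^m$ with $\ord\xi\le s$ and ordered by the canonical orderly ranking, on the target space $K^{n\cdot\binom{m+s}{m}}$ of $\nabla_s$; write $\mathbf 0$ for the zero multi-index. I then define $W_a\subseteq K^{n\cdot\binom{m+s}{m}}$ to be the differential variety cut out by
$$F(z_{1,\mathbf 0},\dots,z_{n,\mathbf 0},\,a)=0 \qquad\text{together with}\qquad z_{i,\xi}=\delta^\xi z_{i,\mathbf 0}\quad(1\le i\le n,\ \ord\xi\le s).$$
The key point, which I would verify directly, is that $W_a=\nabla_s(V_a)$ as subsets of $K^{n\cdot\binom{m+s}{m}}$: every point $z$ of $W_a$ equals $\nabla_s(b)$ for $b=(z_{1,\mathbf 0},\dots,z_{n,\mathbf 0})\in V_a$, and conversely each $\nabla_s(b)$ with $b\in V_a$ satisfies both families of equations.

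With this identification in hand, the rest is bookkeeping. The prolongation equations $z_{i,\xi}-\delta^\xi z_{i,\mathbf 0}$ are linear of order at most $s$, and $F(z_{1,\mathbf 0},\dots,z_{n,\mathbf 0},a)$ has degree $d$ and order at most $R$; hence $(W_a)$ is a definable family of differential varieties, each living in the fixed space $K^{n\cdot\binom{m+s}{m}}$, of order at most $\max(R,s)$ and degree at most $\max(d,1)$, all bounds depending only on $d,R,m,n,s$. Since by definition of the prolongation $B_s(V_a)=\nabla_s(V_a)^{\operatorname{Z-cl}}=W_a^{\operatorname{Z-cl}}$, applying Fact~\ref{degbound}(2) to the family $(W_a)$ yields at once that $(B_s(V_a))$ is a definable family and that a defining formula can be effectively computed.

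The only genuinely delicate step is the identification $\nabla_s(V_a)=W_a$, i.e.\ recognizing that the image of $V_a$ under the differential-polynomial map $\nabla_s$ is already Kolchin-closed and is presented by explicit equations. This is exactly what is needed to bring Fact~\ref{degbound}(2) to bear, since that fact concerns Zariski-closures of differential varieties and not of arbitrary definable sets; without the explicit presentation of the image one would be forced to control the Zariski-closure of a mere projection. Once the presentation is available, the order and degree of its equations are immediately bounded in terms of the ambient data, so the effectivity supplied by Fact~\ref{degbound}(2) transfers verbatim to $(B_s(V_a))$.
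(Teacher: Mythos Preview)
Your proof is correct and follows essentially the same route as the paper: both present $\nabla_s(V_a)$ as the differential variety in $K^{n\cdot\binom{m+s}{m}}$ cut out by the original equations together with the graph equations $z_{i,\xi}=\delta^\xi z_{i,\mathbf 0}$, and then invoke Fact~\ref{degbound}(2). You are simply more explicit than the paper about verifying the identification $W_a=\nabla_s(V_a)$ and about recording the resulting degree and order bounds.
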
 
\begin{proof}
The differential variety $\nabla_s(V_a)$ is given by the equations of $V_a$ together with
$$z=\nabla_s(x)$$
where $x$ are variables for $K^n$ and $z$ are variables for $K^{n\alpha_s}$ that coincide with $x$ in the first $n$ coordinates where $\alpha_s:=\binom{m+s}{m}$. Thus, the family of differential varieties $(\nabla_s(V_a))$ is definable. By part (2) of Fact~\ref{degbound}, the family $((\nabla_s(V_a))^{\operatorname{Z-cl}})$ is definable, and one can effectively derive a formula defining it. Since $B_s(V_a)=(\nabla_s(V_a))^{\operatorname{Z-cl}}$, we are done.
\end{proof}

We will make use of the following well known algebro-geometric fact (for various elementary proofs see \cite{Hrushovski92} and \cite[Introduction and Appendix]{EDC}): 

\begin{fact} \label{factdef} Let $(X_a)$ be a definable family of algebraic varieties (in the language of rings). Then, for fixed $d$ and $\ell$, the set 
$$\{a:\dim X_a\geq d \text{ and $X_a$ has at least $\ell$-many top-dimensional components} \}$$
is definable by a formula in the language of rings. Moreover, a formula defining this set can be effectively computed.
\end{fact}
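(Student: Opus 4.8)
The plan is to split the statement into two definability tasks. Write $S$ for the set in question and, for $d\le e\le n$, let $S_e$ be the set of $a$ such that $\dim X_a=e$ and $X_a$ has at least $\ell$ irreducible components of dimension $e$. Since the top-dimensional components of $X_a$ are exactly those of dimension $\dim X_a$, one has
\[
S=\bigcup_{e=d}^{n} S_e,
\]
and, a finite union of (effectively) definable sets being (effectively) definable, it suffices to treat each $S_e$. The case $e=n$ is trivial: the only $n$-dimensional irreducible subvariety of $K^n$ is $K^n$ itself, so there is always exactly one top-dimensional component, and $S_n$ is $\emptyset$ or everything according to whether $\ell>1$. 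Thus I may assume $e<n$. The two ingredients are then (i) definability, with effective bounds, of the predicate $\dim X_a=e$, and (ii) definability of ``$X_a$ has at least $\ell$ components of dimension $e$'' on the locus where $\dim X_a=e$.

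First I would handle the dimension predicate, using that $\dim X_a\ge d$ if and only if some linear projection $p\colon K^n\to K^d$ restricts to a dominant map on $X_a$, i.e.\ $p(X_a)$ is Zariski dense in $K^d$. The effective degree bound of Fact~\ref{degbound}(1) (in its algebraic, rather than differential, form, or classical effective elimination \cite{Heintz}) furnishes a bound $\beta$ on the degree of $p(X_a)^{\operatorname{Z-cl}}$ whenever it is proper; hence $p(X_a)$ is dense exactly when no nonzero polynomial of degree $\le\beta$ vanishes on it. Unwinding ``$q$ vanishes on $p(X_a)$'' to $\forall x\,(x\in X_a\to q(p(x))=0)$ turns $\dim X_a\ge d$ into the explicit formula
\[
\exists p\ \neg\exists q\ \big(q\neq 0,\ \deg q\le\beta,\ \forall x\,(x\in X_a\to q(p(x))=0)\big),
\]
with $p$ and $q$ ranging over bounded coefficient vectors. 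Hence $\{a:\dim X_a\ge d\}$, and so each $\{a:\dim X_a=e\}$, is definable and the formula is effectively computable.

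With $e<n$ fixed, I would detect the number of $e$-dimensional components of $X_a$ by a generic linear projection $\pi\colon K^n\to K^{e+1}$. For generic $\pi$ the restriction to each $e$-dimensional component is birational onto its image, these images are pairwise distinct irreducible hypersurfaces in $K^{e+1}$, and every lower-dimensional component maps to something of dimension $<e$; consequently the codimension-one irreducible components of $\pi(X_a)^{\operatorname{Z-cl}}$ are in bijection with the top-dimensional components of $X_a$, and no projection produces more. So ``$X_a$ has $\ge\ell$ components of dimension $e$'' is equivalent to the existence of one $\pi$ realizing at least $\ell$ of them, which I would encode as: there exist a linear $\pi\colon K^n\to K^{e+1}$ and pairwise non-associate irreducible nonconstant polynomials $g_1,\dots,g_\ell$ in $e+1$ variables, of degree $\le\beta$, such that each $V(g_i)$ is a component of $\pi(X_a)^{\operatorname{Z-cl}}$. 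Divisibility, irreducibility and non-associateness of bounded-degree polynomials are definable and effective, while ``$V(g_i)$ is a component'' is rephrased through the already-established dimension predicate as $\dim\!\big(V(g_i)\cap \pi(X_a)^{\operatorname{Z-cl}}\big)=e$ — legitimate because an irreducible $e$-dimensional $V(g_i)$ lies in the closed set (of dimension $\le e$ here) iff the intersection has full dimension $e$ — using that $\pi(X_a)^{\operatorname{Z-cl}}$ is an effectively definable family by Fact~\ref{degbound}(2).

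I expect the genuine content to be this component-counting step, for two reasons. First, one must justify the geometric input — that a generic projection to $K^{e+1}$ is birational on each top-dimensional component, separates these components, and kills the lower-dimensional ones — and check both that it persists uniformly in the family and that the ``$\exists\pi$'' quantifier actually realizes the generic count rather than an accidental over- or under-count (the bound ``$\le\ell$ realized $\Rightarrow$ $\ge\ell$ exist'' relies on no projection creating spurious codimension-one components). Second, the passage from a count of components to a first-order statement hinges on the substitution of the dimension equality $\dim\!\big(V(g)\cap\pi(X_a)^{\operatorname{Z-cl}}\big)=e$ for ``$V(g)$ is a component,'' which is available only because dimension is already definable and the image closure is an effectively definable family. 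Keeping every degree bound explicit — so that the quantifiers over $p$, $\pi$, $q$ and the $g_i$ range over bounded-degree data — is what renders the construction effective; these bounds issue from Fact~\ref{degbound} together with classical effective elimination \cite{Heintz}.
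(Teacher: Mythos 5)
Your argument is essentially correct, but note that the paper does not prove this statement at all: it is labelled a ``Fact'' and dispatched by citation to \cite{Hrushovski92} (where it follows from definability of Morley rank and degree in algebraically closed fields) and to the Introduction and Appendix of \cite{EDC} (which gives an elementary algebro-geometric proof). So you have supplied a self-contained proof where the authors supply none. Your route --- detecting $\dim X_a\ge e$ via existence of a linear projection dominant onto $K^e$, certified by the absence of a bounded-degree polynomial vanishing on the image, and then counting $e$-dimensional components by projecting to $K^{e+1}$ and counting pairwise non-associate irreducible bounded-degree factors of the image hypersurface --- is a close cousin of the Chow-form/generic-projection arguments in the cited elementary proofs, and all the ingredients you lean on (effective bounds on degrees of image closures from \cite{Heintz}, definability of irreducibility and divisibility for bounded-degree polynomials, the fact that an arbitrary projection never over-counts while a generic one achieves the true count) are sound; you correctly identify the separation of distinct top-dimensional components under a generic projection as the point needing justification, and the existential quantifier over $\pi$ spares you any uniformity issue there. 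Two trivial slips: $S_n$ for $\ell\le 1$ is $\{a: X_a=K^n\}$, not all of the parameter space (still definable, so nothing is lost); and in the component-counting step you should make explicit that an $e$-dimensional component of $\pi(X_a)^{\operatorname{Z-cl}}$ can only arise as $\pi(C)^{\operatorname{Z-cl}}$ for $C$ a top-dimensional component of $X_a$, which is what guarantees ``no projection produces more.''
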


\begin{corollary}\label{dimdef}
Let $(V_a)$ be a definable family of differential varieties, and $d, \ell, s$ nonnegative integers. Then the set
$$\{a \, : \, \dim B_s (V_a) \geq d \text{ and $B_s(V_a)$ has at least $\ell$-many top-dimensional components}\}$$ 
is definable, and a formula can be effectively computed.
\end{corollary}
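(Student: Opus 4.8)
The plan is to reduce Corollary~\ref{dimdef} to Fact~\ref{factdef} by showing that the prolongation family $(B_s(V_a))$, viewed as a family of \emph{algebraic} varieties in $K^{n\cdot\binom{m+s}{m}}$, is itself a definable family in the language of rings. First I would invoke Proposition~\ref{Definv1} to produce, effectively, a formula $\varphi_s(z,a)$ (in the language of differential rings) defining the family $(B_s(V_a))$. The subtlety is that Fact~\ref{factdef} is a purely algebro-geometric statement requiring a family defined in the \emph{language of rings}, whereas $\varphi_s$ may a priori involve the derivations $\delta_1,\dots,\delta_m$.

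The key observation resolving this is that $B_s(V_a)=(\nabla_s(V_a))^{\operatorname{Z-cl}}$ is by construction a Zariski-closed subset of affine space; being a Zariski closure, it is cut out by ordinary (non-differential) polynomial equations in the prolongation coordinates $z$. Concretely, the effective procedure in the proof of Fact~\ref{degbound}(2) yields $(n\cdot\binom{m+s}{m}+1)$-many polynomials $g_0(z,b),\dots(z,b)$ of bounded degree, and a ring-language formula selecting, for each $a$, a defining tuple $b$ of Zariski-closure coefficients. Thus I would extract from $\varphi_s$ a genuine algebraic family $(X_{(a,b)})$ in the language of rings, where $X_{(a,b)}$ is the algebraic variety $\{z:g_0(z,b)=\cdots=0\}$, together with a ring-formula pinning down the correct $b$ for each $a$. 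Composing these, $(B_s(V_a))$ is exhibited as (the fibres over the $a$-coordinate of) a definable algebraic family.

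With this reduction in hand, I would apply Fact~\ref{factdef} directly to the algebraic family $(X_{(a,b)})$ with the given $d$ and $\ell$, obtaining an effectively computable ring-language formula $\psi_{d,\ell,s}(a,b)$ asserting that $\dim X_{(a,b)}\geq d$ and that $X_{(a,b)}$ has at least $\ell$-many top-dimensional components. Substituting the correct $b$ (via the selector formula from the previous step) and existentially quantifying it out yields a formula in the language of differential rings defining exactly the set
$$\{a:\dim B_s(V_a)\geq d \text{ and } B_s(V_a)\text{ has at least }\ell\text{-many top-dimensional components}\},$$
and all steps being effective, the formula can be effectively computed.

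I expect the main obstacle to be the bookkeeping of the first step: making precise that the differential-ring formula produced by Proposition~\ref{Definv1} can be replaced, uniformly in $a$, by a ring-language description of the prolongation as a plain algebraic variety, so that Fact~\ref{factdef} genuinely applies. Once one is careful that $B_s(V_a)$ lives in the \emph{algebraic} (not differential) category and that its defining data depend definably on $a$ in the language of rings, the remainder is a routine composition of definable conditions.
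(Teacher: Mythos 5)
Your proposal is correct and follows essentially the same route as the paper: both use the generic polynomial family $G_b(z)$ from the proof of Proposition~\ref{Definv1} to present $B_s(V_a)$ as a fibre of an algebraic family indexed by $b$, apply Fact~\ref{factdef} to that algebraic family, and then conjoin with (and quantify out via) the differential-ring formula selecting the correct $b$ for each $a$. Your extra care about the ring-language versus differential-ring-language distinction is exactly the point the paper's terser proof leaves implicit.
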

\begin{proof}
Copying the notation in the proof of Proposition~\ref{Definv1}, we have a generic family of polynomials $G_z(x)$ such that for specific $(a,b)$ there is a formula (in the language of differential rings) such that $G_b(x)$ defines $B_s(V_a)$. If we add to this the formula defining 
$$\{b\,:\,G_b(x) \text{ has dimension $\geq d$ and at least $\ell$-many top-dimensional components}\},$$ 
which exists by Fact~\ref{factdef}, we obtain the desired formula.
\end{proof}

We can now prove the (effective) definability of Kolchin polynomials in families:

\begin{thm} \label{Boundy}
Let $(V_a)$ be a definable family of differential varieties and let $p$ be a numerical polynomial. Then the sets 
\begin{equation}\label{ineq}
\{a \, : \, \omega _{V_a }  \geq  p \}
\end{equation}
and
\begin{equation}\label{eq}
\{a \, : \, \omega _{V_a } =  p \}
\end{equation}
are definable in the structure $(K,\D)$. 
\end{thm}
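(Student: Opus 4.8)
The plan is to reduce the computation of the Kolchin polynomial $\omega_{V_a}$, a differential-algebraic invariant, to a purely algebraic dimension count on prolongations, and then to invoke the definability of dimension in families recorded in Corollary~\ref{dimdef}. Let $r$ be the order of the family $(V_a)$ in the variable $x$; this is uniform in $a$. Applying Theorem~\ref{sbound} to the system $F(x,a)=0$ yields a threshold $s_0$ depending only on $m$, $n$, $r$, and Proposition~\ref{domination} yields a threshold $s_1$ with the same dependence. I set $s_2=\max(s_0,s_1+1)$, so that both theorems are simultaneously applicable for $s\geq s_2$.

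The key step is the claim that for every $a$ and every $s\geq s_2$ one has $\dim B_s(V_a)=\omega_{V_a}(s)$. To establish it I would write $V_a=\bigcup_j W_j$ as the finite union of its irreducible components over $k\langle a\rangle$; since the image of a union is the union of images and Zariski-closure commutes with finite unions, $B_s(V_a)=\bigcup_j B_s(W_j)$, with each $B_s(W_j)=\nabla_s(W_j)^{\operatorname{Z-cl}}$ irreducible (its vanishing ideal is that of the single tuple $\nabla_s(b_j)$). If $b_j$ is a generic point of $W_j$, then $\dim B_s(W_j)=\operatorname{trdeg}_{k\langle a\rangle}k\langle a\rangle(\d^\xi (b_j)_i:\ord\xi\leq s,\; i=1,\dots,n)$, which for $s\geq s_0$ equals $\omega_{W_j}(s)$ by Theorem~\ref{sbound}. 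Hence $\dim B_s(V_a)=\max_j\dim B_s(W_j)=\max_j\omega_{W_j}(s)$. Finally, letting $W^*$ be the component achieving the eventual-domination maximum, so that $\omega_{V_a}=\omega_{W^*}$, Proposition~\ref{domination} guarantees $\omega_{W^*}(s)\geq\omega_{W_j}(s)$ for all $j$ whenever $s>s_1$; thus the pointwise maximum agrees with $\omega_{W^*}(s)=\omega_{V_a}(s)$, proving the claim.

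With the claim in hand the two definability statements follow. For the equality set \eqref{eq}, if $\deg p>m$ the set is empty (hence definable), so I may assume $\deg p\leq m$; then $\omega_{V_a}$ and $p$, being numerical polynomials of degree at most $m$, coincide if and only if they agree at the $m+1$ points $s_2,s_2+1,\dots,s_2+m$, which by the claim is equivalent to $\dim B_s(V_a)=p(s)$ for each such $s$. This is a finite conjunction of conditions, each definable (and effectively so) by Corollary~\ref{dimdef} with $\ell=1$, using $\dim B_s(V_a)\geq p(s)$ together with the negation of $\dim B_s(V_a)\geq p(s)+1$. For the inequality set \eqref{ineq}, the coefficients of $\omega_{V_a}$ are bounded uniformly in $a$ (by \cite[Corollary 3.3]{LS}, as exploited in Proposition~\ref{domination}), so the coefficients of $\omega_{V_a}-p$ are uniformly bounded, and Cauchy's root bound produces an effective $s_3$, independent of $a$, exceeding every real root of $\omega_{V_a}-p$. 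Putting $S=\max(s_2,s_3+1)$, eventual domination $\omega_{V_a}\geq p$ holds if and only if $\omega_{V_a}(S)\geq p(S)$, i.e.\ $\dim B_S(V_a)\geq p(S)$, which is definable by Corollary~\ref{dimdef}.

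I expect the main obstacle to be the key claim $\dim B_s(V_a)=\omega_{V_a}(s)$. The difficulty is that $\omega_{V_a}$ is defined as the maximum of the component Kolchin polynomials under \emph{eventual domination}, whereas the dimension of the prolongation records only the \emph{pointwise} maximum of the $\omega_{W_j}$ at the single level $s$. Reconciling these two notions of ``largest component'' is precisely the role of Proposition~\ref{domination}, and one must choose the threshold $s_2$ large enough that no two component polynomials can cross beyond it while simultaneously lying in the regime where Theorem~\ref{sbound} converts the transcendence degree into the value $\omega_{W_j}(s)$.
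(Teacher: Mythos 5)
Your proposal is correct, and its core is the same as the paper's: both proofs hinge on the identity $\dim B_s(V_a)=\omega_{V_a}(s)$ for all $s$ past a uniform threshold, obtained by combining Theorem~\ref{sbound} (transcendence degree equals the polynomial value in the stable range) with Proposition~\ref{domination} (no crossings among component polynomials past $s_1$), and then both conclude via Corollary~\ref{dimdef}. Where you diverge is in the endgame. The paper proves the domination set \eqref{ineq} first, by observing that the standard coefficients of $\omega_{V_a}$ are recoverable from its values at $m+1$ points and that domination is lexicographic comparison of standard coefficients; it then derives the equality set \eqref{eq} from \eqref{ineq} by invoking Sit's well-ordering of Kolchin polynomials to produce the minimal $q>p$ and writing \eqref{eq} as $\{a:\omega_{V_a}\geq p\}\setminus\{a:\omega_{V_a}\geq q\}$. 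You instead handle \eqref{eq} directly by interpolation at the $m+1$ points $s_2,\dots,s_2+m$ (two degree-$\leq m$ polynomials agreeing there are equal), and handle \eqref{ineq} by a single evaluation at a point $S$ beyond a uniform Cauchy root bound for $\omega_{V_a}-p$. Both finishes are sound; yours avoids the appeal to the well-ordering theorem and yields a particularly transparent formula, at the mild cost of extending the coefficient-bound argument of Proposition~\ref{domination} from differences of two component polynomials to the difference $\omega_{V_a}-p$ with $p$ a fixed numerical polynomial --- this is immediate since the standard coefficients of $\omega_{V_a}$ are uniformly bounded by \cite[Corollary 3.3]{LS} and $m!\,(\omega_{V_a}-p)$ has integer coefficients, so its leading coefficient is at least $1$ in absolute value whenever it is nonzero.
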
 

\begin{proof} 
First note that, by Fact~\ref{kolpol}(ii), if $p$ has degree larger than $m$ then both sets \eqref{ineq} and \eqref{eq} are empty. So we assume that $\deg(p)\leq m$. Now note that \eqref{eq} follows from \eqref{ineq}. Indeed, by Remark~\ref{wellorder}(2), there is a minimum Kolchin polynomial $q$ such that $q>p$, but then \eqref{eq} equals
$$\{a  \, : \, \omega _{V_a }  \geq  p\text{ and } \omega _{V_a }  <  q\}$$
which is definable by \eqref{ineq}.

We now prove \eqref{ineq}. Since the coefficients of any polynomial (in one variable over $\mathbb Q$) of degree at most $m$ are determined by its values at $m+1$ distinct natural numbers (using invertibility of Vandermonde matrices for instance), if we can effectively bound (within given intervals) the values of $\omega_{V_a}$ at $m+1$ distinct natural numbers then we can effectively decide when $\omega_{V_a}\geq p$. This is because a numerical polynomial $q$ dominates $p$ iff the standard coefficients (as a numerical polynomial) of $q$ are greater than or equal to those of $p$ in the lexicographical order.

Suppose the family $(V_a)$ has degree $d$ and order $r$. Let $s_1$ be as in Proposition~\ref{domination} (note that $s_1\geq s_0$ with $s_0$ defined as in Theorem~\ref{sbound}). Let $W$ be an irreducible component of $V_a$ (for fixed but arbitrary $a$) of maximal Kolchin polynomial. By Theorem~\ref{sbound} and Proposition~\ref{domination}, for all $s> s_1$, we have that $\dim B_s(W)\geq \dim B_s(W')$ for $W'$ any other component of $V_a$. Since $B_s(V_a)$ equals the union of the prolongation of its components, for $s> s_1$ we get that $\dim B_s(V_a)=\dim B_s(W)$. The upshot is that now Theorem~\ref{sbound} yields $\omega_{V_a}(s)=\dim B_s(V_a)$ for all $a$ and $s> s_1$. 

Now the result follows from Corollary \ref{dimdef}, as it shows that we can effectively find a formula that determines those $a$'s such that $\dim B_s(V_a)$ is within a fixed interval, and hence the same applies to $\omega_{V_a}(s)$ for any $s> s_1$, as desired.
\end{proof} 

\begin{rem} 
The various steps (in the current and previous section) for the proof of Theorem \ref{Boundy} are effective in the sense that they provide a general recipe for a specific formula giving the collection of fibres with some fixed Kolchin polynomial. 
\end{rem} 


From the proof of Theorem~\ref{Boundy}, we can deduce the following:

\begin{proposition}\label{onmaxpol}
Let $(V_a)$ be a definable family of differential varieties and $\ell$ a nonnegative integer. Then the set
\begin{equation}\label{manycomp}
\{a:\, V_a \text{ has at least $\ell$-many components of maximal Kolchin polynomial}\}
\end{equation}
is definable in the structure $(K,\D)$.
\end{proposition}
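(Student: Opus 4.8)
The plan is to combine the definability of prolongation dimension in families (Corollary~\ref{dimdef}) with the structural fact, established in the proof of Theorem~\ref{Boundy}, that for $s > s_1$ the Kolchin polynomial of a component $W$ of $V_a$ is recovered by $\omega_W(s) = \dim B_s(W)$, and moreover that eventual domination of Kolchin polynomials is witnessed pointwise at any single $s > s_1$ (by Proposition~\ref{domination}). The key observation is that a component $W$ of $V_a$ has maximal Kolchin polynomial precisely when $\dim B_s(W)$ is maximal among all components, for every $s > s_1$; and since the prolongation of $V_a$ is the union of the prolongations of its components, the components of maximal Kolchin polynomial correspond, for such $s$, exactly to the top-dimensional components of the algebraic variety $B_s(V_a)$.

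First I would fix the family $(V_a)$, let it have degree $d$ and order $r$, and set $s_1$ as in Proposition~\ref{domination}. I would then fix any single value $s^* > s_1$. The crucial reduction is that counting components of $V_a$ of maximal Kolchin polynomial is the same as counting top-dimensional components of $B_{s^*}(V_a)$. To justify this I would argue as follows: for each component $W$ of $V_a$, the prolongation $B_{s^*}(W)$ is an irreducible algebraic variety of dimension $\omega_W(s^*)$ (irreducibility because $\nabla_{s^*}$ is the image of the irreducible $W$ under a morphism, so its Zariski closure is irreducible). By Proposition~\ref{domination} applied at $s^*$, the ordering of the $\dim B_{s^*}(W)$ among components matches the eventual-domination ordering of the $\omega_W$. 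Hence $V_a$ has at least $\ell$ components of maximal Kolchin polynomial if and only if $B_{s^*}(V_a)$ has at least $\ell$ top-dimensional components of that common maximal dimension.

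Having made this reduction, the statement becomes immediate from Corollary~\ref{dimdef}. Since the maximal dimension $d$ of $B_{s^*}(V_a)$ is itself bounded (it is at most $n\binom{m+s^*}{m}$) and varies definably with $a$, I would quantify over the finitely many possible values of $d$: the desired set \eqref{manycomp} equals the union over admissible $d$ of the sets
$$\{a : \dim B_{s^*}(V_a) = d \text{ and } B_{s^*}(V_a) \text{ has at least } \ell \text{ top-dimensional components}\},$$
each of which is definable by Corollary~\ref{dimdef} (the condition $\dim B_{s^*}(V_a) = d$ being the conjunction of $\dim \geq d$ with the negation of $\dim \geq d+1$, and having at least $\ell$ components of dimension exactly $d$ being definable since, when $\dim = d$, the top-dimensional components are precisely the $d$-dimensional ones). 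A finite union of definable sets is definable, and effectivity is inherited from Corollary~\ref{dimdef}.

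The main obstacle I anticipate is the bookkeeping in the reduction from maximal-Kolchin-polynomial components to top-dimensional prolongation components, specifically ensuring that no spurious components of $B_{s^*}(V_a)$ arise that do not come from a single component of $V_a$, and that distinct components of maximal Kolchin polynomial yield distinct top-dimensional components of $B_{s^*}(V_a)$. This requires checking that $B_{s^*}(V_a) = \bigcup_W B_{s^*}(W)$ (which holds because prolongation commutes with finite unions, as $\nabla_{s^*}$ is applied pointwise and Zariski closure distributes over finite unions), that each $B_{s^*}(W)$ is irreducible, and that a component of maximal dimension in the union must be one of the irreducible pieces $B_{s^*}(W)$ of maximal dimension. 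Once these points are verified the argument is routine.
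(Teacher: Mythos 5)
Your proposal is correct and follows essentially the same route as the paper: both identify the components of maximal Kolchin polynomial with the top-dimensional components of the prolongation $B_s(V_a)$ for $s$ beyond the bound $s_1$ of Proposition~\ref{domination}, and then invoke Corollary~\ref{dimdef}. The only cosmetic difference is that you work at a single $s^*>s_1$ (which Proposition~\ref{domination} indeed justifies) while the paper quantifies over $s=s_1,\dots,s_1+m$.
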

\begin{proof}
Using the notation of the proof of Theorem~\ref{Boundy}, we see that for $s> s_1$, we have $\omega_{V_a}(s)=\omega_{V_i}(s)$ for each $i$ such that $B_s(V_i)$ is top-dimensional in $B_s(V)$. Thus, for such $V_i$'s we get $\omega_{V_a}=\omega_{V_i}$; in other words, such $V_i$'s are the components of $V$ of maximal Kolchin polynomial. This shows that the set \eqref{manycomp} equals the set of those $a$'s such that $B_s(V_a)$ has at least $\ell$-many top-dimensional components for $s=s_1,\dots, s_1+m$. But the latter set of $a$'s is definable by Corollary~\ref{dimdef}.
\end{proof}


\section{Some applications of definability} \label{applications}

Using Theorem~\ref{Boundy} and Proposition~\ref{onmaxpol}, we can prove the definability of other interesting differential-algebraic properties. For instance,

\begin{corollary}
Let $(V_a)$ be a definable family of differential varieties, $p$ a numerical polynomial, and $\ell$ a nonnegative integer. Then the set
$$\{a:\, \omega_{V_a}=p \text{ and $V_a$ has at least $\ell$-many components with Kolchin polynomial $p$}\} $$
is definable in the structure $(K,\D)$.
\end{corollary}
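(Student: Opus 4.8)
The plan is to combine the two definability results already established, namely Theorem~\ref{Boundy} for the condition $\omega_{V_a}=p$ and Proposition~\ref{onmaxpol} for the count of components of maximal Kolchin polynomial. The key observation is that the two clauses in the set we wish to define interact nicely: once we know $\omega_{V_a}=p$, the components with Kolchin polynomial $p$ are precisely the components of maximal Kolchin polynomial, since $p$ is the maximal value by definition of $\omega_{V_a}$. Thus on the locus where $\omega_{V_a}=p$, the phrase ``components with Kolchin polynomial $p$'' is interchangeable with ``components of maximal Kolchin polynomial.''

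Concretely, I would first invoke Theorem~\ref{Boundy} to produce a formula $\varphi(a)$ defining $\{a:\omega_{V_a}=p\}$. Next I would invoke Proposition~\ref{onmaxpol} to produce a formula $\psi_\ell(a)$ defining the set of those $a$ for which $V_a$ has at least $\ell$-many components of maximal Kolchin polynomial. The set in question is then defined by the conjunction $\varphi(a)\wedge\psi_\ell(a)$: on the locus cut out by $\varphi$, having at least $\ell$ components of maximal Kolchin polynomial is the same as having at least $\ell$ components with Kolchin polynomial equal to $p$, because maximality forces that common value to be exactly $p$.

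The argument is essentially immediate given the two prior results, so I do not expect a genuine obstacle; the only point requiring a moment of care is verifying the equivalence ``maximal Kolchin polynomial $=$ Kolchin polynomial $p$'' holds on the $\varphi$-locus. This follows directly from the definition $\omega_{V_a}=\max_{\leq}\{\omega_W:W\text{ a component of }V_a\}$ and Remark~\ref{wellorder}(1), which guarantees the maximum is well-defined among the finitely many components. Since both $\varphi$ and $\psi_\ell$ are produced effectively by the cited results, the resulting conjunction is also an effectively computable formula, so the set is definable as claimed.
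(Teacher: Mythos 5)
Your proposal is correct and is exactly the paper's argument: the paper also writes the set as the intersection of $\{a:\omega_{V_a}=p\}$ (definable by Theorem~\ref{Boundy}) with $\{a: V_a$ has at least $\ell$-many components of maximal Kolchin polynomial$\}$ (definable by Proposition~\ref{onmaxpol}). Your additional remark that on the first locus ``maximal Kolchin polynomial'' coincides with ``Kolchin polynomial $p$'' is the (implicit) justification the paper relies on, so nothing is missing.
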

\begin{proof}
The above set is simply the intersection of the sets
$$\{a:\, \omega_{V_a}=p\} $$
and 
$$\{a:\, \text{ and $V_a$ has at least $\ell$-many components of maximal Kolchin polynomial}\} $$
The former is definable by Theorem~\ref{Boundy}, while the latter is definable by Proposition~\ref{onmaxpol}.
\end{proof}

Recall that a differential variety $V$ is said to be {\em weakly irreducible} if it has exactly one component of maximal Kolchin polynomial. Proposition \ref{onmaxpol} has the following immediate consequence.

\begin{corr}
Given a definable family $(V_a)$ of differential varieties, the set
$$\{a:\, V_a \text{ is weakly irreducible}\}$$
is definable in the structure $(K,\D)$.
\end{corr}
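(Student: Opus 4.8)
The plan is to reduce the weak irreducibility of $V_a$ directly to a counting condition on components of maximal Kolchin polynomial, which we already know to be definable. Recall that, by definition, $V_a$ is weakly irreducible precisely when it has \emph{exactly one} component of maximal Kolchin polynomial. The key observation is that Proposition~\ref{onmaxpol} gives us, for every nonnegative integer $\ell$, a formula defining the set of those $a$ for which $V_a$ has at least $\ell$-many components of maximal Kolchin polynomial.

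First I would apply Proposition~\ref{onmaxpol} with $\ell = 1$: since any nonempty differential variety has at least one irreducible component (and hence at least one component of maximal Kolchin polynomial), the resulting set is essentially the locus of nonempty $V_a$, and in any case it is definable. Next I would apply Proposition~\ref{onmaxpol} with $\ell = 2$ to obtain a formula defining the set
$$\{a:\, V_a \text{ has at least $2$-many components of maximal Kolchin polynomial}\}.$$
The set of $a$ for which $V_a$ is weakly irreducible is then exactly the difference of these two sets: those $a$ for which $V_a$ has at least one component of maximal Kolchin polynomial but \emph{not} at least two. Since definable sets are closed under Boolean combinations (in particular, under complement and intersection), this difference is definable in $(K,\D)$, which is what we want.

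I expect no serious obstacle here: the entire content has been packaged into Proposition~\ref{onmaxpol}, and all that remains is to express ``exactly one'' as ``at least one and not at least two'' and invoke closure of the definable sets under Boolean operations. The only point requiring a moment's care is the edge case of the empty variety — one should confirm that the intended notion of weak irreducibility either excludes the empty $V_a$ or treats it consistently; in either reading the corresponding locus is cut out by the $\ell=1$ instance of Proposition~\ref{onmaxpol} (or its negation), so definability is unaffected.
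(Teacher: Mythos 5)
Your proof is correct and is exactly the argument the paper intends: the corollary is stated as an immediate consequence of Proposition~\ref{onmaxpol}, obtained by expressing ``exactly one component of maximal Kolchin polynomial'' as the Boolean combination ``at least one and not at least two'' of the definable sets that proposition provides.
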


Note that being weakly irreducible is not equivalent to being (fully) irreducible. In fact the question of definability of irreducibility for differential varieties is remarkably difficult (and remains open); it is actually equivalent to the generalized Ritt problem which is a longstanding problem since the 1950's (see \cite[Theorem 1]{GKO}).

We recall that the differential type of $V$, usually denoted by $\tau_V$, is defined as the degree of the Kolchin polynomial of $V$. We now show that the differential type is also a property that is definable in families.

\begin{corollary}
Let $(V_a)$ be a definable family of differential varieties, and $d$ a nonnegative integer. Then, the set
$$\{a :\, \tau_{V_a}\geq d\}$$
is definable.
\end{corollary}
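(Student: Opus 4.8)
The plan is to reduce the statement about the differential type $\tau_{V_a}$ to the already-established definability of Kolchin polynomials. Recall that $\tau_{V_a}$ is by definition the degree of $\omega_{V_a}$, and by Fact~\ref{kolpol}(ii) this degree is at most $m$. The key observation is that the condition $\tau_{V_a}\geq d$ is equivalent to saying that $\omega_{V_a}$ dominates some numerical polynomial of degree exactly $d$; more precisely, $\tau_{V_a}\geq d$ if and only if $\omega_{V_a}\geq p$ for some numerical polynomial $p$ with $\deg p=d$.

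First I would fix attention on a single, simplest numerical polynomial of degree $d$, namely $p_d(t)=\binom{t+d}{d}$. Since $p_d$ is the smallest numerical polynomial whose standard coefficients vanish above degree $d$ and whose leading standard coefficient is $1$, I claim that $\tau_{V_a}\geq d$ holds if and only if $\omega_{V_a}\geq p_d$. One direction is immediate: if $\omega_{V_a}\geq p_d$ then $\deg\omega_{V_a}\geq\deg p_d=d$, since eventual domination forces the degree to be at least as large. For the converse, if $\deg\omega_{V_a}\geq d$, then, writing $\omega_{V_a}$ in standard form $\sum_{j=0}^m a_j\binom{t+j}{j}$ with $a_j\in\ZZ$, the highest index $j$ with $a_j\neq 0$ is at least $d$, and since the Kolchin polynomial is eventually nonnegative and increasing its top standard coefficient $a_{\deg\omega_{V_a}}$ is positive. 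Comparing standard coefficients in the lexicographic order (from the top down, as in the characterization of eventual domination used throughout the paper), $\omega_{V_a}$ then dominates $\binom{t+d}{d}=p_d$.

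With this equivalence in hand, the result follows at once: the set $\{a:\tau_{V_a}\geq d\}$ equals $\{a:\omega_{V_a}\geq p_d\}$, which is definable by Theorem~\ref{Boundy} (the set \eqref{ineq} with $p=p_d$). Since Theorem~\ref{Boundy} also asserts effectivity, one can moreover effectively produce a formula.

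The only genuinely delicate point is verifying the equivalence $\tau_{V_a}\geq d \iff \omega_{V_a}\geq p_d$ at the level of standard coefficients, and in particular confirming that the top standard coefficient of a Kolchin polynomial is strictly positive so that the lexicographic comparison with $p_d$ comes out correctly; this is where I would be careful, though it is routine given the well-ordering and domination facts recorded in Remark~\ref{wellorder} and the description of $\leq$ via standard coefficients. Everything else is a direct appeal to Theorem~\ref{Boundy}, so I expect no further obstacle.
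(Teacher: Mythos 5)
Your reduction to Theorem~\ref{Boundy} is the right move, but the way you discharge ``the only genuinely delicate point'' has a real gap. You claim that $\tau_{V_a}\geq d$ implies $\omega_{V_a}\geq\binom{t+d}{d}$ because the top standard coefficient of $\omega_{V_a}$ is positive and so the lexicographic comparison ``comes out correctly.'' That is not enough: in the critical case $\deg\omega_{V_a}=d$ with leading standard coefficient $a_d=1$, the comparison with $\binom{t+d}{d}$ is a tie at index $d$ and is decided by the \emph{lower} standard coefficients, which for numerical polynomials can perfectly well be negative (e.g.\ $2\binom{t+1}{1}-1$ arises as $\omega_E$ for $E=\{(1,1)\}\subseteq\NN^2$). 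So positivity of the top coefficient does not settle the lex comparison, and Remark~\ref{wellorder} (well-ordering) does not tell you that $\binom{t+d}{d}$ is the least Kolchin polynomial of type $\geq d$. Your equivalence $\tau_{V_a}\geq d\iff\omega_{V_a}\geq\binom{t+d}{d}$ is in fact true, but proving it requires a genuine combinatorial input about which numerical polynomials occur as Kolchin polynomials: via Fact~\ref{use1} one reduces to a single $\omega_E$ of degree $d$ with leading standard coefficient $1$, and one must show the corresponding order ideal in $\NN^m$ contains a full coordinate subspace $\NN^d$, whence $\omega_E\geq\binom{t+d}{d}$. None of that is in your write-up, and it is not ``routine'' from the facts you cite.

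The paper avoids identifying the minimal polynomial altogether: by Remark~\ref{wellorder}(2) the set of Kolchin polynomials of differential type at least $d$ has a least element $p$ (whatever it is), and then $\{a:\tau_{V_a}\geq d\}=\{a:\omega_{V_a}\geq p\}$ --- the forward inclusion by minimality of $p$, the reverse because $\omega_{V_a}\geq p$ with $\deg p\geq d$ forces $\deg\omega_{V_a}\geq d$ --- after which Theorem~\ref{Boundy} applies. I recommend either adopting that argument, or, if you want the explicit $p=\binom{t+d}{d}$ (which does give a more concrete formula), supplying the missing lemma on $\omega_E$ sketched above rather than appealing to positivity of the leading coefficient.
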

\begin{proof}
Consider the set of all Kolchin polynomials with differential type at least $d$. By Remark~\ref{wellorder}(2), this set has a smallest element (with respect to eventual domination), say $p$. By Theorem~\ref{Boundy}, the set $\{a:\, \omega_{V_a}\geq p\}$ is definable. The latter is precisely the set of those $a$'s such that the differential type of $V_a$ is at least $d$.
\end{proof}

\smallskip

Our last result says that a definable family admits only finitely many Kolchin polynomials.

\begin{corollary} 
Let $(V_a)$ be a definable family of differential varieties. Then the set 
$$\{\omega_{V_a}:\, \text{ as $V_a$ varies in the family}\}$$ 
is finite.
\end{corollary}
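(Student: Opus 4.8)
The plan is to exploit the fact, established in the proof of Theorem~\ref{Boundy}, that the threshold $s_1$ of Proposition~\ref{domination} depends only on the order $r$ of the family (together with $m$ and $n$), and \emph{not} on the parameter $a$. Since by Fact~\ref{kolpol}(ii) every $\omega_{V_a}$ has degree at most $m$, each such polynomial is completely determined by its values at any $m+1$ distinct natural numbers. I would therefore reduce the problem to showing that, evaluated on a single fixed set of $m+1$ natural numbers, the polynomials $\omega_{V_a}$ take only finitely many tuples of values as $a$ varies.

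First I would fix the $m+1$ evaluation points $s_1+1, s_1+2, \dots, s_1+m+1$, all of which exceed $s_1$. By the identity recorded in the proof of Theorem~\ref{Boundy} (which combines Theorem~\ref{sbound} and Proposition~\ref{domination}), for every $a$ and every $s > s_1$ we have
$$\omega_{V_a}(s) = \dim B_s(V_a).$$
Next I would observe that $B_s(V_a)$ is, by definition, a Zariski-closed subset of $K^{n\cdot\binom{m+s}{m}}$, so its dimension lies in the finite set $\{0,1,\dots,n\binom{m+s}{m}\}$ (the empty-fibre case, if it occurs, contributes at most one further degenerate value and does not affect finiteness). Hence, for each of our finitely many fixed evaluation points $s$, the quantity $\omega_{V_a}(s)$ ranges over a finite set as $a$ varies.

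Consequently the tuple $\big(\omega_{V_a}(s_1+1), \dots, \omega_{V_a}(s_1+m+1)\big)$ takes only finitely many values over all $a$. Since a polynomial of degree at most $m$ is uniquely recovered from its values at these $m+1$ distinct points (by invertibility of the relevant Vandermonde matrix, exactly as used in the proof of Theorem~\ref{Boundy}), the assignment $a \mapsto \omega_{V_a}$ factors through a map with finite image, and therefore the set $\{\omega_{V_a}\}$ is finite. I do not expect a genuine obstacle here: the substantive work has already been done in securing the \emph{uniform} (independent of $a$) bound $s_1$ together with the dimension-theoretic reinterpretation $\omega_{V_a}(s)=\dim B_s(V_a)$. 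The only additional ingredient is the trivial ambient bound on $\dim B_s(V_a)$, which confines each evaluation $\omega_{V_a}(s)$ to finitely many possibilities, and this is what converts the uniform threshold into finiteness of the set of Kolchin polynomials.
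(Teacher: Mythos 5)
Your proof is correct, but it takes a genuinely different route from the paper. The paper argues by compactness: assuming infinitely many Kolchin polynomials occur, the definable sets $A_p=\{a:\omega_{V_a}\neq p\}$ (definable by Theorem~\ref{Boundy}) have the finite intersection property, so by saturation of $K$ some $a_0$ lies in all of them, contradicting the fact that $\omega_{V_{a_0}}$ is a numerical polynomial of degree at most $m$. That argument uses only the \emph{statement} of Theorem~\ref{Boundy} plus saturation, and is correspondingly soft: it gives no bound on the number of polynomials. You instead reach inside the proof of Theorem~\ref{Boundy}, using the uniformity of $s_1$ in $a$ and the identity $\omega_{V_a}(s)=\dim B_s(V_a)$ for $s>s_1$, together with the trivial ambient bound $\dim B_s(V_a)\leq n\binom{m+s}{m}$ and Lagrange/Vandermonde interpolation at $m+1$ points. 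This bypasses saturation and definability entirely and is strictly more informative: it yields an explicit, effectively computable upper bound on the cardinality of $\{\omega_{V_a}\}$, namely at most $\prod_{j=1}^{m+1}\bigl(n\binom{m+s_1+j}{m}+1\bigr)$ (plus the degenerate empty-fibre value), which is in the spirit of the paper's emphasis on effectivity. Your handling of the empty fibre as a single extra degenerate value is the right disclaimer. Both proofs are valid; yours trades the paper's brevity for a quantitative conclusion the paper does not state.
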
 
\begin{proof} 
Towards a contradiction assume the set is infinite. Let $\mathfrak N$ be the collection of numerical polynomials of degree at most $m$. For each $p\in \mathfrak N$, consider the set
$$A_p=\{a: \omega_{V_a}\neq p\}$$
The set $A_p$ is definable by Theorem \ref{Boundy}. Moreover, by our assumption, the intersection of finitely many of the $A_p$'s is nonempty. But by universality of $K$ (or saturation rather), the intersection $\displaystyle \bigcap_{p\in \mathfrak N}A_p$ would contain a point, say $a_0$. Of course, this is impossible since $\omega_{V_{a_0}}$ is a numerical polynomial of degree at most $m$.
\end{proof}

\bibliographystyle{plain}

\end{document}